\documentclass[11pt, a4paper]{amsart}
\usepackage{amsmath}
\usepackage{geometry,amsthm,graphics,tabularx,amssymb,shapepar, braket}
\usepackage{latexsym,amsfonts,amssymb,amsmath,amsxtra,color,mathrsfs}
\usepackage{amscd}
\usepackage{mathtools}
\usepackage{enumitem}
\allowdisplaybreaks[4]

\usepackage{hyperref}
\usepackage[nameinlink,capitalise,noabbrev]{cleveref}
\usepackage{pdfsync}

\usepackage{verbatim}

\newcommand{\BC}{{\mathbb {C}}}

\newcommand{\BF}{{\mathbb {F}}}

\newcommand{\CK}{{\mathcal {K}}}

\newcommand{\CW}{{\mathcal {W}}}

\newcommand{\GL}{{\mathrm{GL}}}

\newcommand{\Hom}{{\mathrm{Hom}}}

\renewcommand{\Re}{{\mathrm{Re}}}

\newcommand{\SL}{{\mathrm{SL}}}

\newcommand{\p}{\frac{\partial\bar\partial}{\Pi i}}

\newcommand{\bs}{\backslash}

\newcommand{\diag}{\operatorname{diag}}

\newcommand{\od}{\operatorname{d}}

\renewcommand{\p}{\mathfrak p}

\renewcommand{\o}{\mathfrak o}

\newcommand{\C}{\mathbb{C}}

\newcommand{\ve}{{\vee}}

\newcommand{\abs}[1]{\lvert#1\rvert}

\newcommand{\be}{\begin {equation}}
\newcommand{\ee}{\end {equation}}
\newcommand{\bee}{\begin {equation*}}
\newcommand{\eee}{\end {equation*}}

\newcommand{\St}{\operatorname{St}}

\setlist{nosep,leftmargin=2em}

\theoremstyle{plain}

\theoremstyle{plain}

\newtheorem*{theoremA'}{Theorem A'}

\theoremstyle{plain}
\newtheorem{lem}{Lemma}[section]

\newtheorem{thml}[lem]{Theorem}
\newtheorem{leml}[lem]{Lemma}

\theoremstyle{plain}

\theoremstyle{remark}

\newtheorem{remarkl}[lem]{Remark}

\theoremstyle{definition}

\theoremstyle{plain}
\newtheorem{theorem}{Theorem}[section]

\newtheorem{corollary}[theorem]{Corollary}

\numberwithin{equation}{section}

\begin{document}

\title[Rankin--Selberg integrals of opposite conductor-one  newforms]{Rankin--Selberg integrals of opposite conductor--one newforms}

 \author[D. Liu]{Dongwen Liu}
\address{School of Mathematical Sciences, Zhejiang University, 866 Yuhangtang Road, Hangzhou 310058, P.R. China}
\email{maliu@zju.edu.cn}

\author[L. Zhang]{Lei Zhang}
\address{Department of Mathematics, National University of Singapore, Singapore 119076}
\email{matzhlei@nus.edu.sg}

	%\date{\today}
	%\subjclass[2020]{22E50, 43A80}
	%\keywords{Newform, Whittaker functions, Rankin--Selberg integral, Hecke algebra}

\begin{abstract}
Let $F$ be a nonarchimedean local field of characteristic zero and let $n\geq2$.  For
$r=n,n+1$, let $\Pi_r$ be an irreducible tempered representation of
$\GL_r(F)$ of conductor one and with trivial central character.  
%The  long Weyl element carries the usual one-dimensional newform space in $\Pi_r$ to a one-dimensional space $\sigma_r$ fixed by an opposite parahoric subgroup.  
We evaluate the Rankin--Selberg integral of {\it opposite} newforms in $\Pi_{n+1}\times  \Pi_n$ explicitly and show that its central value is nonzero. As an application, this implies a case of Disegni--Zhang's conjecture on the nonvanishing of local relative characters.
\end{abstract}

\maketitle

\section{Introduction}
\label{sec:intro}

Let $F$ be a nonarchimedean local field of characteristic zero, with normalized absolute value $\abs{\cdot}_F$.  Let $\o$ be the ring of integers of $F$, with maximal ideal $\p$, a uniformizer $\varpi$, and 
residue field $\o/\p\cong\mathbb F_q$.
For $r\geq1$, write
\[
 G_r :=\GL_r(F),\qquad K_r:=\GL_r(\o).
\]
For every integer $c\geq 0$, define the congruence subgroup
\[
K_{c, r}:=\{ k\in K_r: (0,\ldots,0,1)k\equiv(0,\ldots,0,1)\pmod{\p^c}\}.
\]
The conductor $c(\pi)$ of an irreducible generic representation $\pi$ of
$G_r$ is the smallest integer $c\geq0$ for which
$\pi^{K_{c,r}}\neq0$.  Then
$\dim\pi^{K_{c(\pi),r}}=1$, and a basis vector of this space is called a
newform of $\pi$.

Let $\Pi_n$ and $\Pi_{n+1}$ be irreducible generic representations of
$G_n$ and $G_{n+1}$, respectively.  We use the diagonal embedding
\[
G_n \hookrightarrow G_{n+1}\times G_n,\qquad g \mapsto \left(\begin{pmatrix} g  \\ & 1\end{pmatrix}, g \right).
\]
If $\Pi_n$ is unramified, that is, if $c(\Pi_n)=0$, the 
Rankin--Selberg integral (\cite{JPSS}) of the Whittaker newforms of $\Pi_{n+1}$ and
$\Pi_n$ equals a constant multiple of the Rankin--Selberg $L$-function.  We refer to
\cite{JPSS81,J,Matringe,Miyauchi} for the theory of Whittaker
newforms.

Suppose instead that $c(\Pi_n)\geq1$.  By the multiplicity-one theorem
\cite{AGRS} and the theory of Rankin--Selberg integrals, we have
\[
\dim\Hom_{G_n}(\Pi_{n+1}\otimes\Pi_n,\C) =1,
\]
where $G_n$ acts through
the diagonal embedding above.  Let
$\ell\in\Hom_{G_n}(\Pi_{n+1}\otimes\Pi_n,\C)$, and let $v_r\in\Pi_r$ be a newform.
Then $v\mapsto\ell(v_{n+1}\otimes v)$ is a $K_n$-invariant linear
functional on $\Pi_n$.  Since $c(\Pi_n^\vee) = c(\Pi_n)>0$,  we have $(\Pi_n^\vee)^{K_n}=\{0\}$.  It follows that
\be \label{van}
\ell(v_{n+1}\otimes v_n)=0.
\ee
Thus the two usual newforms do not form  test vectors for the
$G_n$-invariant linear functional $\ell$.

From now on, for $r=n,n+1$, assume that $\Pi_r$ is tempered, has
conductor one, and has trivial central character.  Define the opposite
congruence subgroup
\[
 K_{1,r}^-:= w_r K_{1,r} w_r^{-1}
 =\{k\in K_r:(1,0,\ldots,0)k\equiv(1,0,\ldots,0)\pmod\p\}.
\]
Here and henceforth, $w_r\in G_r$ denotes the anti-diagonal permutation matrix.  
%We also define the opposite parahoric subgroup
%\[
%\CK_{1,r}^-:=\o^\times \cdot K_{1,r}^-,
%\]
%where $\o^\times$ is identified with the maximal compact subgroup of the center of $G_r$. 
The action by $w_r$ gives a linear isomorphism of one-dimensional spaces
\[
\Pi_r^{K_{1,r}} \xrightarrow{\sim} \Pi_r^{K_{1,r}^-}=:\sigma_r.
\]
Theorem \ref{thm:main} below implies that, for every nonzero
$\ell\in\Hom_{G_n}(\Pi_{n+1}\otimes\Pi_n,\C)$,
\be \label{nonvanish}
\ell|_{\sigma_{n+1}\otimes \sigma_n} \neq 0.
\ee

Let $N_r$ be the upper triangular maximal unipotent subgroup of
$G_r$.  Fix a nontrivial unitary additive character
$\psi:F\to\C^\times$ with conductor $\o$, and define
\[
 \psi_r(x):=\psi\!\left((-1)^r\sum_{i=1}^{r-1}x_{i,i+1}\right),
 \qquad x\in N_r.
\]
Write $\CW(\Pi_r,\psi_r)$ for the Whittaker model of $\Pi_r$ with respect to
$(N_r,\psi_r)$.

For $r\geq 1$, let $B_r=N_rA_r$ be the upper triangular Borel subgroup
of $G_r$, with diagonal torus $A_r \cong (F^\times)^r$. Denote by $\delta_r$ the modular character of $B_r$.  We give $K_r$ probability measure and give $F^\times$ the
multiplicative Haar measure for which $\o^\times$ has volume one.  If
$\od\! a$ denotes the resulting product measure on $A_r$, the
quotient measure on $N_r\backslash G_r$ is
\[
\od\!  g= \delta_r^{-1}(a)\od \!a\od\!k,
\qquad g=uak,\quad u\in N_r,\ a\in A_r,\ k\in K_r.
\]
For $W_r\in\CW(\Pi_r,\psi_r)$, $r=n,n+1$, define the Rankin--Selberg integral
\cite{JPSS}
\be \label{RS}
Z(s, W_{n+1}, W_n):=\int_{N_n\bs G_n} W_{n+1}\begin{pmatrix} g  \\ & 1\end{pmatrix} W_n(g)\abs{\det g}_F^{s-\frac{1}{2}}\od\! g,\quad s\in \C,
\ee
which converges absolutely for $\Re(s)$ sufficiently large and defines a rational function of $q^s$.  Temperedness implies absolute
convergence at $s=\frac12$, and the bilinear map
$(W_{n+1},W_n)\mapsto Z(\frac12,W_{n+1},W_n)$ defines a nonzero element of
$\Hom_{G_n}(\Pi_{n+1}\otimes\Pi_n,\C)$.

By Lemma \ref{prop:class}, we have $n\geq2$ and the $L$-parameter of $\Pi_{n+1}$ (a Weil--Deligne representation) is of the form 
\be \label{pin+1}
\phi_{\Pi_{n+1}} = \St_2\otimes\chi  + \bigoplus^{n-1}_{i=1}\chi_i,
\ee
where $\chi$ and $\chi_i$ are  unitary unramified characters such that $\chi^2\chi_1\cdots\chi_{n-1}=1$, and $\St_2$ is the Weil--Deligne representation corresponding to the Steinberg
representation of $G_2$.  The representation $\Pi_n$ has the analogous $L$-parameter
\eqref{pin} below.  For $r=n,n+1$, let
$W_r^-\in\CW(\Pi_r,\psi_r)^{K_{1,r}^-}$ be the opposite Whittaker
newform normalized by
\[
W_r^-(w_r)=1.
\]

The main result of this paper is as follows. 

\begin{theorem}
\label{thm:main}
Let the notation and assumptions be as above.  Then
\[
 Z(s,W_{n+1}^-,W_n^-)
 =-\frac{\chi(\varpi) q^{ns- \frac{1}{2}}}{ \# \mathbb{P}^{n-1}(\mathbb{F}_q)}
L(s,\Pi_{n+1}\times\Pi_n)
\]
as a rational function of $q^s$. In particular, $Z(\frac{1}{2}, W_{n+1}^-, W_n^-)\neq 0$. 
\end{theorem}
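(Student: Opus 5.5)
Here is the plan. We translate everything back to the usual newforms and use explicit formulas for conductor-one Whittaker newforms. First, $W_r^-(g)$ is a multiple of $W_r(g w_r)$, where $W_r$ is the usual Whittaker newform, since $\Pi_r(w_r)$ carries $K_{1,r}$-invariants to $K_{1,r}^-$-invariants. Because the integrand in \eqref{RS} involves $\begin{pmatrix} g\\ &1\end{pmatrix}$, it is convenient to write
\[
\begin{pmatrix} g w_n \\ & 1\end{pmatrix} = \begin{pmatrix} g \\ &1\end{pmatrix} \begin{pmatrix} w_n \\ &1\end{pmatrix},
\]
and to relate $\begin{pmatrix} w_n\\&1\end{pmatrix}$ to $w_{n+1}$ by a permutation lying in $K_{n+1}$. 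We then substitute $g\mapsto g w_n$ in \eqref{RS}, which is harmless since $w_n\in K_n$.

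Next, we use the Iwasawa decomposition $g=uak$ and decompose $K_n$ into cosets modulo $K_{1,n}^-$, or equivalently modulo the parahoric containing it. Since $K_n/K_{1,n}$ is essentially $\mathbb P^{n-1}(\mathbb F_q)$ up to the torus action, the integral over $K_n$ becomes a finite sum over Bruhat-type cells indexed by points of $\mathbb P^{n-1}(\mathbb F_q)$, or rather by the $n$ Schubert cells of the flag variety relative to the parabolic of type $(n-1,1)$. This is the source of the factor $1/\#\mathbb P^{n-1}(\mathbb F_q)$. On each cell we need the values of $W_{n+1}^-$ and $W_n^-$ on $a k$. For this we use the known formula for conductor-one Whittaker newforms in the spirit of Miyauchi and Matringe: $W_r$ restricted to the torus is given by a Shintani-type formula for the $L$-parameter $\St_2\otimes\chi\oplus\bigoplus\chi_i$. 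That formula is a Schur polynomial in $\chi(\varpi)q^{-1/2}$ and the $\chi_i(\varpi)$, which we can take from Lemma \ref{prop:class} and the references. Combining this with the $K_{1,r}$-Iwahori-type Hecke algebra relations gives $W_r$ on $a\,w$ for the relevant Weyl coset representatives $w$.

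The expected key simplification is that only one cell contributes, namely the one where both arguments reduce to $a\cdot w_r$-type elements. On the other cells, $\psi$-equivariance together with $K_{1}$-invariance forces vanishing, or the contributions cancel after summing over residue classes; this is a Gauss-sum-type cancellation over $\mathbb F_q$ that produces the sign $-1$. On the surviving cell the integral collapses to a sum over dominant $a$, of the form $\sum_\lambda W_{n+1}(\dots)W_n(\dots)q^{-|\lambda|(s-1/2)}\delta^{-1}$. The Cauchy identity for Schur polynomials, as in the unramified computation of Jacquet--Piatetski-Shapiro--Shalika, then identifies this sum with $L(s,\Pi_{n+1}\times\Pi_n)$ times the monomial $\chi(\varpi)q^{ns-1/2}$, which comes from a shift of $\lambda$ by one in the first coordinate.

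The main obstacle is the third step: computing $W_r^-$ on $A_r K_r$ outside the big cell and verifying the cancellation. I would first try to derive the values from the eigenvalue equation of the $U_\p$-type Hecke operator on the newform. Nonvanishing at $s=\tfrac12$ then follows because the $L$-factor has no zeros and, by temperedness, no poles at $\Re s=\tfrac12$.
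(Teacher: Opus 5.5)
\textbf{There is a genuine gap.} It sits exactly at the step you call the main obstacle: the simplification you expect there is false, and the final identification does not produce the right $L$-factor.

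\emph{No cancellation inside a cell.} Since $\psi_{n+1}|_{N_n}=\psi_n^{-1}$, the integrand $g\mapsto W_{n+1}^-(\diag(g,1))W_n^-(g)$ is left $N_n$-invariant and right $\mathfrak{o}^\times K_{1,n}^-$-invariant. Hence all points of $K_n/\mathfrak{o}^\times K_{1,n}^-\cong\mathbb{P}^{n-1}(\mathbb{F}_q)$ lying in one $N_n(\mathfrak{o})$-orbit contribute identically, and nothing can cancel ``after summing over residue classes'' in the integral. Ramanujan-sum cancellations do occur, but only inside the evaluation of $W_r^\circ$ on a cell.

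\emph{The other cells do not vanish.} For $n=2$ one can compute $W_3^\circ$ on the relevant cells. The inputs are that $\Pi_3$ has no vectors fixed by $K_3$ or by $\diag(1,1,\varpi)K_3\diag(1,1,\varpi)^{-1}$, together with the $T_{a_3}$-eigenvalue equation. Put $X=q^{1/2-s}$, so that $(q+1)Z$ is the sum of the cell contributions.
\begin{itemize}
\item The one-point cell $k=w_2$ contributes $-q^{1/2}\chi(\varpi)X^{-2}-\xi(\varpi)\bigl(q^{-1}\chi(\varpi)^2+q^{-1/2}\chi(\varpi)^{-1}\bigr)X^{-1}+\cdots$.
\item The $q$-point cell containing $k=1$, the one where both arguments are of the form $a\cdot w_r$, contributes $-\xi(\varpi)\chi(\varpi)^2X^{-1}+\cdots$.
\end{itemize}
Both are nonzero, and only their sum matches $-\chi(\varpi)q^{2s-1/2}L(s,\Pi_3\times\Pi_2)$. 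So $1/\#\mathbb{P}^{n-1}(\mathbb{F}_q)$ is just the volume of a coset, and the numerator is a sum over all cells.

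\emph{The Cauchy step gives the wrong $L$-factor.} On any single cell, the Cauchy identity in the Satake parameters $\{q^{-1/2}\chi(\varpi),\chi_i(\varpi)\}$ and $\{q^{-1/2}\xi(\varpi),\xi_j(\varpi)\}$ yields
\[
L(s,\Pi^u_{n+1}\times\Pi^u_n)=(1-q^{-s}\chi\xi(\varpi))\,L(s,\Pi_{n+1}\times\Pi_n),
\]
not $L(s,\Pi_{n+1}\times\Pi_n)$. The factor $L(s,\chi\xi)$, which comes from the summand $S_1\subset S_2\otimes S_2$, is unaccounted for. Also, the monomial $q^{ns}$ requires the central shift $g\mapsto\varpi^{-1}g$, not a shift in one coordinate. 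Finally, for general $n$ you would need $W_{n+1}^\circ$ and $W_n^\circ$ on all $n$ cells, and none of that is actually computed.

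\textbf{The missing idea} is a way to avoid evaluating newforms off the torus at all; the paper does this with Hecke operators.
\begin{enumerate}
\item Apply $T_{a_{n+1}}$ to $W_{n+1}^\circ$ in both $Z(s,W_{n+1}^\circ,W_n^\circ)=0$ and $Z(s,c.W_{n+1}^\circ,W_n^\circ)$, where $c=\diag(w_n,1)w_{n+1}$.
\item In the first integral, the cosets with $x_n=0$ give $q^{n(s-1/2)+n-1}Z(s,W^{\rm av}_{n+1},W_n^\circ)$, which is known by Booker--Krishnamurthy--Lee. The $(q-1)q^{n-1}$ cosets with $x_n\neq0$ are all equal, so each equals $-q^{n(s-1/2)}(q-1)^{-1}Z(s,W^{\rm av}_{n+1},W_n^\circ)$.
\item In the second integral, the explicit Iwasawa decomposition $c\,u(x)a_{n+1}=\diag(h_x,1)\,u(e_n)a_{n+1}\,k_x$ shows that the cosets with $x_1\neq0$ give exactly those same terms.
\item The cosets with $x_1=0$ reassemble into $q^{1/2-s}T_{b_n}$ acting on $W_n^\circ$.
\item Solving the resulting linear equation, the factor $L(s,\chi\xi)$ arises from dividing by
\[
\lambda_{a_{n+1}}-q^{1/2-s}\lambda_{b_n}=q^{(n-1)/2}\chi^{-1}(\varpi)\bigl(1-q^{-s}\chi\xi(\varpi)\bigr).
\]
\end{enumerate}
Your final nonvanishing argument at $s=\tfrac12$ is fine once the formula is established.
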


Theorem \ref{thm:main} has the following application to the nonvanishing of local relative characters, which improves \cite[Theorem C]{DZ}. More details will be explained in Section \ref{sec:conj}.

\begin{corollary}\label{cor}
\cite[Conjecture 3.6.10]{DZ} holds when $\Pi_{n,v}$ and $\Pi_{n+1,v}$ both have conductor one.
\end{corollary}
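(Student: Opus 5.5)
Since $\Pi_r$ has conductor one and trivial central character, Lemma \ref{prop:class} identifies $\Pi_{n+1}$ (and, analogously, $\Pi_n$) as an irreducible parabolically induced representation. For $\Pi_{n+1}$ the inducing data are $\St_2\otimes\chi$ on a $\GL_2$ block together with unramified characters $\chi_1,\dots,\chi_{n-1}$. The natural model for the newform is therefore built from the Iwahori-spherical vectors of $\St_2$: the $K_{1,r}$-fixed vector corresponds to a parahoric-fixed vector. My plan is to compute both opposite newforms explicitly as Whittaker functions and then evaluate the Rankin--Selberg integral through the Iwasawa decomposition, reducing everything to sums over the torus.

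\textbf{Step 1: an explicit formula for $W_r^-$.} Write $W_r^-(g)=W_r(gw_r)$ up to normalization, where $W_r$ is the usual $K_{1,r}$-fixed newform. Miyauchi/Matringe give $W_r$ on the torus: it is supported on dominant $\varpi^\lambda$, with values given by Schur-type polynomials in the Satake parameters of the "unramified part" of the $L$-parameter (here $\chi\abs{\cdot}^{1/2},\chi_1,\ldots,\chi_{n-1}$), times $\delta^{1/2}$. However, the Rankin--Selberg integral needs $W_r^-$ on $a k$ for all $k\in K_n$, not just on the torus. Since $W_{n+1}^-$ restricted to $G_n$ is right-invariant under $K_{1,n}^-\subset K_{1,n+1}^-\cap G_n$, and similarly for $W_n^-$, the integrand is right $K_{1,n}^-$-invariant. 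The integral over $K_n$ therefore becomes a finite sum over $K_n/K_{1,n}^-\cong\mathbb P^{n-1}(\mathbb F_q)$; this is where the factor $\#\mathbb P^{n-1}(\mathbb F_q)$ comes from. I would decompose $\mathbb P^{n-1}(\mathbb F_q)$ into Bruhat-type cells $B_n(\o)$-orbits, i.e.\ according to the position of the first nonzero coordinate. For each cell, I would compute $W_r^-(ak)$ by moving $k$ past $a$: the vector $e_1 k$ determines which "partial" Iwahori-fixed vector of $\St_2$ is seen. Concretely, I would use the Iwahori-Hecke algebra description: $W_r^-$ is a specific Iwahori-fixed vector, and the values $W_r^-(a\,s)$ for $s$ in a set of Weyl representatives are given by the Casselman--Shalika-type formulas for Iwahori Whittaker functions (e.g.\ via Demazure--Lusztig operators applied to the Iwahori Whittaker function at the identity).

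\textbf{Step 2: assembling the integral.} With these values, $Z(s,W_{n+1}^-,W_n^-)$ becomes
\[
\frac{1}{\#\mathbb P^{n-1}(\mathbb F_q)}\sum_{\text{cells } C}\#C\sum_{\lambda}W_{n+1}^-\!\left(\begin{pmatrix}\varpi^\lambda k_C&\\&1\end{pmatrix}\right)W_n^-(\varpi^\lambda k_C)\,\delta_n^{-1}(\varpi^\lambda)q^{-\abs{\lambda}(s-1/2)}.
\]
I expect most cells to contribute zero, or to telescope against each other. This is the mechanism behind \eqref{van}: the $k$-average of the ordinary newform pair vanishes, and only the cell containing $w_n$-type representatives should survive, with a shift of $\lambda$ by one unit. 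That shift produces the factor $q^{ns-1/2}$ (a $\det=\varpi^{-1}$ translate), while the sign and $\chi(\varpi)$ come from the $\St_2$ eigenvalue of the Atkin--Lehner/$U_p$-type operator, which is $-\chi(\varpi)q^{-1/2}$ up to normalization. The remaining sum over $\lambda$ of products of Schur polynomials would then be evaluated by the Cauchy identity to give $L(s,\Pi_{n+1}\times\Pi_n)$. This is the same computation as in the unramified case, since $L(s,\Pi_{n+1}\times\Pi_n)$ is the unramified $L$-function of the "unramified parts" of the parameters, the $\St_2\otimes\St_2$ contribution being $L(s,\chi\chi'\abs{\cdot}^{1/2}\ldots)$ included correctly.

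\textbf{Main obstacle.} The hard step is Step 1 in the cells other than the trivial one: obtaining usable closed formulas for $W_r^-(ak)$ and proving that the unwanted contributions cancel exactly. I would first try to avoid explicit cell computations. The idea is to express $\sum_{k\in K_n/K_{1,n}^-}R(k)W_{n+1}^-$ through a Hecke-operator identity: a $K_n$-average of an Iwahori-type vector equals a linear combination of the spherical-type vector and translates. Using $\Pi_{n+1}|_{K_n}$, I would reduce to an integral against the $K_n$-average of $W_n^-$ paired with a translate of the ordinary newform $W_{n+1}$. I would then invoke the known formula $Z(s,W_{n+1},W_n^{\circ})\propto L(s)$ in the style of Miyauchi/JPSS for newform-versus-"unramified-like" vectors, and treat the final constant as a normalization bookkeeping check at $W^-(w_r)=1$. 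Nonvanishing at $s=\tfrac12$ then follows because tempered $L$-factors have no zeros or poles on $\Re(s)=\tfrac12$, and the prefactor is nonzero.
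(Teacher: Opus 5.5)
\textbf{There is a genuine gap: your proposal proves (or sketches) the wrong statement.} It is a plan for the Rankin--Selberg evaluation in Theorem \ref{thm:main}. The corollary, however, is about \cite[Conjecture 3.6.10]{DZ}, namely that the local relative characters satisfy $I_\Pi(e_{K'})=J_\pi(e_K)\neq 0$ for $K$ a vertex parahoric of type $(1,1)$ or $(n-1,n)$. Nonvanishing of $Z(\frac12,W_{n+1}^-,W_n^-)$ is only one ingredient, and you may simply quote it from Theorem \ref{thm:main}. Three steps are missing entirely.
\begin{enumerate}
\item[(i)] The equality $I_\Pi(e_{K'})=J_\pi(e_K)$. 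This is Z.~Zhang's vertex parahoric transfer \cite[Theorem 1.1]{Z}, and it reduces the problem to the general linear side.
\item[(ii)] The identification, via \cite[Lemma 3.6.7]{DZ}, of the distinguished member $\pi$ of the Vogan packet as the almost unramified representation. Together with an explicit choice of vertex lattices, this shows that $K'$ is the image of $\CK_{1,n}^-\times\CK_{1,n+1}^-$. Hence $\Pi^{K'}$ is the line spanned by $W=W_n^-\otimes W_{n+1}^-$, and by \eqref{trace} $I_\Pi(e_{K'})$ is, up to nonzero normalizations, the product $P_1(W)P_2(W^\vee)$ of a Rankin--Selberg period and a Flicker--Rallis period.
\item[(iii)] The nonvanishing of the Flicker--Rallis factor $P_2(W^\vee)$, which has nothing to do with Rankin--Selberg integrals. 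The paper gets it from the Anandavardhanan--Matringe nonvanishing on newforms \cite[Theorem 6.1]{AM}. It transports this from the newform $(w_n,w_{n+1}).W^\vee$ to $W^\vee$, using that $P_2$ transforms by a character under $G'^{\sf c}$ and that $(w_n,w_{n+1})\in G'^{\sf c}$.
\end{enumerate}
Your plan only touches $P_1(W)=Z(\frac12,W_{n+1}^-,W_n^-)$.

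\textbf{The reproof of that factor also does not go through as written.}
\begin{enumerate}
\item[(a)] The cancellation of the unwanted cells is only expected, not shown.
\item[(b)] The fallback fails. Since both conductors are one, $Z(s,W_{n+1}^\circ,W_n^\circ)\equiv 0$ by \eqref{van}, so there is no formula $Z(s,W_{n+1},W_n^\circ)\propto L(s)$ to invoke. Likewise, $K_n$-averaging one vector only pairs it against the $K_n$-average of the other, by diagonal invariance. That average vanishes because $\Pi_n^{K_n}=0$.
\item[(c)] $L(s,\Pi_{n+1}\times\Pi_n)$ is not the unramified $L$-function of the \emph{unramified parts}. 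By \eqref{naive}, that $L$-function is $(1-q^{-s}\chi(\varpi)\xi(\varpi))L(s,\Pi_{n+1}\times\Pi_n)$; the discrepancy is the $S_1$ summand of $S_2\otimes S_2$. So a Cauchy-identity evaluation in those Satake parameters is inconsistent with the target and misses a pole.
\end{enumerate}
The paper avoids explicit Iwahori Whittaker values altogether. It applies $T_{a_{n+1}}$ to $c.W_{n+1}^\circ$. The terms with $x_1=0$ give $q^{-s+\frac12}\lambda_{b_n}Z(s,c.W_{n+1}^\circ,W_n^\circ)$ via $T_{b_n}$ on $\Pi_n$. The terms with $x_1\neq0$ are tied, using $Z(s,W_{n+1}^\circ,W_n^\circ)=0$, to the Booker--Krishnamurthy--Lee integral of Theorem \ref{BKL}. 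The coefficient $\lambda_{a_{n+1}}-q^{-s+\frac12}\lambda_{b_n}=q^{\frac{n-1}{2}}\chi^{-1}(\varpi)(1-q^{-s}\chi(\varpi)\xi(\varpi))$ then supplies exactly the missing factor.
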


This paper is organized as follows.  In \S\ref{sec2} we collect some preliminaries. We prove Theorem \ref{thm:main} by applying Hecke operators to the Rankin--Selberg integrals of Whittaker newforms 
and opposite Whittaker newforms in \S\ref{sec3} and \S\ref{sec4}, respectively. In \S\ref{sec:conj} we explain Corollary \ref{cor} and its proof.

\section{Preliminaries} \label{sec2}

In this section, we introduce some notation and recall a few known results that will be used in  the proof of Theorem \ref{thm:main}.

The following classification is given by \cite[Lemma 3.6.1]{DZ}.

\begin{leml}\label{prop:class} 
Assume that $\Pi_n$ is an irreducible tempered representation of
$G_n$ of conductor one and with trivial central character.  Then
$n\geq2$ and the $L$-parameter of $\Pi_n$ is 
\be \label{pin}
\phi_{\Pi_n} =  \St_2\otimes \xi + \bigoplus^{n-2}_{i=1} \xi_i 
\ee
where $\xi$ and $\xi_i$ are unitary unramified characters such that
$\xi^2\xi_1\cdots\xi_{n-2}=1$, and $\St_2$ is the Weil--Deligne representation corresponding to the Steinberg
representation of $G_2$.  
\end{leml}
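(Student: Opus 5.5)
This lemma is the classification of \cite[Lemma 3.6.1]{DZ}. We would reprove it directly from the conductor formula for Langlands parameters together with the structure of tempered representations.

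\textbf{Step 1: the parameter.} By the local Langlands correspondence for $G_n$, which preserves conductors and central characters, $c(\Pi_n)=a(\phi_{\Pi_n})$ is the Artin conductor of the Weil--Deligne representation $\phi=\phi_{\Pi_n}$. Temperedness means $\phi=\bigoplus_j \rho_j\otimes\St_{m_j}$ with $\rho_j$ irreducible representations of the Weil group with bounded (unitary) image.

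\textbf{Step 2: conductor of each summand.} We use the conductor formula
\[
a(\rho\otimes\St_m)=m\,a(\rho)+(m-1)\dim\rho\cdot\dim\rho^{I},
\]
where $I$ is inertia, $\rho^I$ its fixed space, and $a(\rho)=0$ exactly when $\rho$ is an unramified character. Every summand contributes a nonnegative integer.

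\textbf{Step 3: forcing the shape.} Total conductor $1$ means exactly one summand contributes $1$ and all others contribute $0$.
\begin{itemize}
\item A contribution $0$ forces $m=1$ and $\rho$ unramified, since if $m\ge2$ and $\rho^I\ne0$ the contribution is already positive. So these summands are unramified unitary characters $\xi_i$.
\item For the summand contributing $1$, there are two options. If $m=1$, then $\rho$ has $a(\rho)=1$, i.e.\ it is tamely ramified with $\dim\rho-\dim\rho^I=1$. An irreducible $\rho$ with $\rho^I\neq0$ has $\rho^I=\rho$, so $\rho^I=0$, and therefore $\rho$ is a tamely ramified character.
\item If $m\ge2$, we need $m\,a(\rho)+(m-1)\dim\rho\,\dim\rho^I=1$. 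This forces $a(\rho)=0$, so $\rho$ is an unramified character, and then $m=2$. This gives $\St_2\otimes\xi$.
\end{itemize}

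\textbf{Step 4: the central character, which is the main obstacle.} We must rule out the tamely ramified character $\mu$ using the hypothesis of trivial central character. The central character corresponds to $\det\phi$. In the first case $\det\phi=\mu\cdot\prod\xi_i$, which is ramified because $\mu$ is ramified and the $\xi_i$ are unramified. This contradicts triviality, so the first case is excluded.

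Hence $\phi=\St_2\otimes\xi+\bigoplus_{i=1}^{n-2}\xi_i$. Since $\det(\St_2\otimes\xi)=\xi^2$ (with the usual normalization $\St_2=\mathrm{sp}(2)$ having determinant $|\cdot|^{1/2}\cdot|\cdot|^{-1/2}=1$), trivial central character gives $\xi^2\xi_1\cdots\xi_{n-2}=1$. The presence of $\St_2$ forces $n\ge2$.

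The only genuinely delicate points are the correct conductor formula for $\rho\otimes\St_m$ and the normalization of $\St_2$ so that its determinant is trivial.
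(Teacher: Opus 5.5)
Your argument is correct. The paper itself gives no proof of this lemma: it imports the classification wholesale from \cite[Lemma 3.6.1]{DZ}. Your route is therefore genuinely different, in that it is a self-contained derivation.

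You use $c(\Pi_n)=a(\phi_{\Pi_n})$, which holds because the JPSS conductor equals the $\epsilon$-factor exponent and the local Langlands correspondence preserves $\epsilon$-factors. You then use additivity of the Artin conductor over $\phi=\bigoplus_j\rho_j\otimes\St_{m_j}$. Conductor one then leaves only two shapes: a tamely ramified character plus unramified characters, or $\St_2\otimes\xi$ plus unramified characters. The condition $\det\phi=1$ kills the first shape, since on inertia $\det\phi$ restricts to $\mu|_I\neq1$.

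What the citation buys is brevity. What your argument buys is transparency about where each hypothesis enters. Conductor one restricts the shape. Trivial central character removes the ramified-character case and yields both $\xi^2\xi_1\cdots\xi_{n-2}=1$ and $n\geq2$. Temperedness is used only for the unitarity of $\xi$ and the $\xi_i$.

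One small slip: the correct formula is $a(\rho\otimes\St_m)=m\,a(\rho)+(m-1)\dim\rho^{I}$, without the extra factor $\dim\rho$. For irreducible $\rho$ this agrees with what you wrote. Indeed, $\rho^{I}$ is a $W_F$-subrepresentation, hence equal to $0$ or $\rho$, and $\rho^{I}=\rho$ forces $\rho$ to be an unramified character with $\dim\rho=1$. So nothing downstream is affected.
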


In view of  Lemma \ref{prop:class}, we henceforth assume that
$\Pi_{n+1}$ and $\Pi_n$ have $L$-parameters as in \eqref{pin+1} and \eqref{pin},
respectively. Let 
\be \label{unr}
\phi_{\Pi_{n+1}^u} := \abs{\cdot}_F^{1/2} \chi + \bigoplus^{n-1}_{i=1} \chi_i,
\ee
which is the $L$-parameter of an unramified representation $\Pi_{n+1}^u$ of $G_n$ such that 
\[
L(s, \Pi_{n+1}^u) = L(s, \Pi_{n+1}).
\]
Similarly, we have an unramified representation $\Pi_{n}^u$ of $G_{n-1}$ with $L$-parameter 
\[
\phi_{\Pi_n^u} =  \abs{\cdot}_F^{1/2} \xi + \bigoplus^{n-2}_{i=1} \xi_i
\]
such that  $L(s, \Pi_n^u) = L(s,\Pi_n)$.  In view of the Clebsch-Gordan decomposition 
\[
S_2\otimes S_2 \cong S_3 \oplus S_1,
\]
where $S_m$ denotes the $m$-dimensional irreducible algebraic representation of $\SL_2(\BC)$, 
it is easy to see that
\be\label{naive}
L(s, \Pi_{n+1}^u \times \Pi_n^u ) = (1-q^{-s}\chi(\varpi)\xi(\varpi)) L(s, \Pi_{n+1} \times \Pi_n). 
\ee

For $r=n, n+1$, let $W_r^\circ\in \CW(\Pi_r, \psi_r)^{K_{1,r}}$ be the Whittaker newform normalized such that $W_r^\circ(1_r)=1$. Here and henceforth, $1_r$ denotes the $r\times r$ identity matrix. 
In \cite{BKL}, it is shown that $W_n^\circ$ and a unipotent average of $W_{n+1}^\circ$ form test vectors for $L(s, \Pi_{n+1}^u \times \Pi_n^u)$, which is recalled below.

For convenience, fix a lift $\omega: \mathbb{F}_q \hookrightarrow \frak o$ with $\omega(0)=0$, which in particular identifies $\mathbb{F}_q^\times$ with a subset of $\frak o^\times$. 
For a row vector $x\in F^r:=F^{1\times r}$, write
\[
u(x): = \begin{pmatrix} 1_r & x^{\rm t} \\ 0 & 1\end{pmatrix}\in N_{r+1},
\]
where $(\cdot)^{\rm t}$ denotes the transpose of a matrix. We have the following special case of \cite{BKL}.

\begin{thml} \label{BKL}
Let
\[
W_{n+1}^{\rm av} : = q^{1-n} \sum_{y\in \BF_q^{n-1}} u(\varpi^{-1}y, 1).W_{n+1}^\circ.
\]
Then 
\[
Z(s, W_{n+1}^{\rm av}, W_n^\circ) = \frac{1}{\#\mathbb{P}^{n-1}(\mathbb{F}_q)}L(s, \Pi_{n+1}^u\times \Pi_n^u).
\]
\end{thml}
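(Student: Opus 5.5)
Since the last statement is Theorem \ref{BKL}, a special case of \cite{BKL} that the paper imports, what is wanted is a sketch of how one would verify it directly. The plan is to unfold the Rankin--Selberg integral with the Iwasawa decomposition $g=uak$ and compute the $K_n$-integral explicitly. Since $W_n^\circ$ is right $K_{1,n}$-invariant, I write $K_n=\bigsqcup_{\gamma} \gamma K_{1,n}$, with $\gamma$ running over $K_n/K_{1,n}\cong\mathbb P^{n-1}(\BF_q)$, the index being the $K_n$-orbit of the bottom row modulo $\p$. Replacing $W_{n+1}^\circ$ by $W_{n+1}^{\rm av}$ is designed to make the $K_n$-integral collapse onto a single coset, up to the factor $1/\#\mathbb P^{n-1}(\BF_q)$ which is its volume. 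This collapse is then combined with the explicit formulas for newforms on the torus.

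\textbf{Step 1: restriction of newforms to the torus.} For $W_r^\circ$ on $A_r$ I use the known formulas (Miyauchi, Matringe). For conductor one, $W_r^\circ(\diag(a,1))$ with $a\in A_{r-1}$ equals $\delta_{r}^{1/2}$ times the Schur polynomial attached to the unramified parameter $\phi_{\Pi_r^u}$ of $G_{r-1}$. It is supported on dominant $a$, and the last diagonal entry is trivial modulo the center.

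\textbf{Step 2: analysing each coset.} For each coset representative $\gamma$ I analyse $k\mapsto W_{n+1}^{\rm av}(\diag(ak,1))\,W_n^\circ(ak)$. The averaging element $u(\varpi^{-1}y,1)$ has last column with entries of valuation $-1$ in the first $n-1$ slots and $1$ in the $n$-th slot. Conjugating it past $\diag(a\gamma,1)$ produces a character value $\psi_{n+1}$ evaluated on the last column of $a\gamma$ times the averaging vector. Summing over $y\in\BF_q^{n-1}$ is then a finite-field character sum. It vanishes unless a certain entry of the bottom row of $\gamma$ is a unit (or non-unit), which kills all cosets but the trivial one $\gamma=1$. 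On that coset everything is $K_{1,n}$-invariant, and the integrand becomes $W_{n+1}^\circ(\diag(a,1))W_n^\circ(a)$ twisted by the $a_n$-dependence coming from the $u(0,\ldots,0,1)$ entry.

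\textbf{Step 3: summing over the torus.} The remaining sum over dominant $a$ of products of Schur polynomials in $\phi_{\Pi_{n+1}^u}$ (on $G_n$) and $\phi_{\Pi_n^u}$ (on $G_{n-1}$, padded) is evaluated by the Cauchy--Shintani identity $\sum_\lambda s_\lambda(x)s_\lambda(y)q^{-s|\lambda|}=L(s,\Pi^u_{n+1}\times\Pi^u_n)$.

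\textbf{Main obstacle.} I expect the main obstacle to be Step 2. There one must show that the character sum isolates exactly one coset for all $a$ in the support, and that the $\psi(\varpi^{-1}y\cdot)$ factors interact correctly with the valuation of $a_n$ so that no boundary terms survive. I would first check the case $n=2$ by hand to fix the sign conventions of $\psi_r$.
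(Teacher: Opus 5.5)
The paper does not prove this statement; it quotes it from \cite{BKL}. So I compare your sketch with a direct verification. Your framework is the right one: Iwasawa decomposition, moving the averaging element through $\operatorname{diag}(g,1)$, Shintani formulas, and the Cauchy identity. But the mechanism you propose in Step 2 fails.

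For $g=ak$ with $k\in K_n$ having bottom row $(k_{n1},\dots,k_{nn})$, one has $\operatorname{diag}(k,1)\in K_{1,n+1}$. Hence
\[
W_{n+1}^{\rm av}\begin{pmatrix} ak & \\ & 1\end{pmatrix}
=W_{n+1}^\circ\begin{pmatrix} a & \\ & 1\end{pmatrix}\cdot q^{1-n}\sum_{y\in\mathbb F_q^{n-1}}\psi\Bigl((-1)^{n+1}a_n\bigl(\varpi^{-1}\textstyle\sum_{j<n}k_{nj}y_j+k_{nn}\bigr)\Bigr),
\]
and $W_{n+1}^\circ(\operatorname{diag}(a,1))=0$ unless $v(a_n)\ge 0$, where $v$ is the normalized valuation of $F$.

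When $v(a_n)=0$, the character sum is the indicator of $\mathfrak o^\times K_{1,n}$, as you expect. When $v(a_n)\ge 1$, however, every phase lies in $\mathfrak o$ and the sum is identically $1$ on $K_n$, so no coset is isolated. Your claim that the character sum ``isolates exactly one coset for all $a$ in the support'' is therefore false.

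These layers are not negligible either. Write $\varpi^\lambda=\operatorname{diag}(\varpi^{\lambda_1},\dots,\varpi^{\lambda_n})$. The trivial central character gives $W_n^\circ(\varpi^\lambda)=W_n^\circ(\varpi^{-\lambda_n}\varpi^{\lambda})$, which does not vanish for $\lambda_n\ge 1$. For example, the trivial-coset term at $a=\varpi 1_n$ is $W_{n+1}^\circ(a_{n+1})W_n^\circ(\varpi 1_n)=q^{-\frac{n+1}{2}}\chi^{-1}(\varpi)\neq 0$. Keeping only the trivial coset for all dominant $a$ would multiply the correct answer by $(1-\chi^{-1}(\varpi)q^{-ns-\frac12})^{-1}$. ``Padding'' $\phi_{\Pi_n^u}$ with a zero does not describe $W_n^\circ$ on $A_n$.

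The missing idea is as follows. For $v(a_n)\ge 1$, the contribution of the layer is
\[
W_{n+1}^\circ(\operatorname{diag}(a,1))\int_{K_n}W_n^\circ(ak)\,\mathrm{d}k .
\]
This vanishes because $\int_{K_n}\Pi_n(k)W_n^\circ\,\mathrm{d}k\in\Pi_n^{K_n}=0$, since $\Pi_n$ has conductor one. It is the same fact behind \eqref{van} and behind $Z(s,W_{n+1}^\circ,W_n^\circ)=0$ in \S\ref{sec3}. So in these layers the nontrivial cosets are cancelled by the ramification of $\Pi_n$, not by the character sum.

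With this, only $\lambda=(\mu,0)$ survives, with weight $\operatorname{vol}(\mathfrak o^\times K_{1,n})=1/\#\mathbb P^{n-1}(\mathbb F_q)$. The modular factors combine to $q^{-s|\mu|}$, and your Step 3 (the Cauchy identity in $n$ and $n-1$ variables) finishes the proof.

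Two smaller corrections:
\begin{itemize}
\item $K_n/K_{1,n}$ has $q^n-1$ elements. It is $K_n/\mathfrak o^\times K_{1,n}$, available because of the trivial central character, that is identified with $\mathbb P^{n-1}(\mathbb F_q)$.
\item The last entry of the averaging vector produces no twist, since $u(0,\dots,0,1)\in K_{1,n+1}$. Equivalently, $\psi(\pm a_nk_{nn})=1$ whenever $v(a_n)\ge 0$.
\end{itemize}
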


By definition, we have
$W_r^- = w_r. W_r^\circ$.
Let 
\be \label{c}
c:= \begin{pmatrix} w_n \\ & 1\end{pmatrix} w_{n+1} = \begin{pmatrix} & 1_n \\ 1 & \end{pmatrix}.
\ee
Then 
\be \label{RSeq}
Z(s, W_{n+1}^-, W_n^-) = Z(s, c. W_{n+1}^\circ, W_n^\circ).
\ee
In the next two sections, we compute the Rankin--Selberg integral $Z(s, c.W_{n+1}^\circ, W_n^\circ)$ using Hecke operators and prove Theorem \ref{thm:main}.

\section{Integrals of newforms} \label{sec3}

For $g\in G_r$, the Hecke operator 
\[
T_g: = \frac{1}{{\rm vol}(K_{1,r})}\int_{K_{1,r} g K_{1, r}} \Pi_r(h) \od\!h
\]
acts on the one-dimensional space $\Pi_r^{K_{1,r}}$, and we denote its eigenvalue by $\lambda_g$. 
Put 
\be \label{ab}
a_r: = \begin{pmatrix} \varpi 1_{r-1} \\ & 1\end{pmatrix},\qquad  b_r: = \begin{pmatrix} 1_{r-1} \\ & \varpi\end{pmatrix}.
\ee
By a direct calculation, 
\[
K_{1,r} a_r K_{1,r} = \bigsqcup_{x\in \mathbb{F}_q^{r-1}} u(x) a_r K_{1,r},
\]
hence
\be \label{hecke}
T_{a_r}(v) = \sum_{x\in \mathbb{F}_q^{r-1}} u(x)a_{r+1}. v,\qquad v\in \Pi_r^{K_{1,r}}.
\ee

\begin{leml} \label{lem:eigen}
We have 
\[
\lambda_{a_{n+1}} = q^{\frac{n-1}{2}} \chi^{-1}(\varpi), \quad \lambda_{a_n} = q^{\frac{n-2}{2}} \xi^{-1}(\varpi)\quad \text{and}\quad \lambda_{b_n} = q^{\frac{n-2}{2}} \xi(\varpi).
\]
\end{leml}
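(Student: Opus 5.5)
The plan is to compute each eigenvalue by evaluating the Whittaker newform at the identity. Then we use the known explicit formula for conductor-one Whittaker newforms on the torus to get $\lambda_{a_r}$. Finally, $\lambda_{b_n}$ follows from $\lambda_{a_n}$ by an adjointness argument.

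\emph{Step 1: reduction to a value of $W_r^\circ$.} Since $W_r^\circ$ spans $\CW(\Pi_r,\psi_r)^{K_{1,r}}$ and $W_r^\circ(1_r)=1$, we have $\lambda_{a_r}=(T_{a_r}W_r^\circ)(1_r)$. By the coset decomposition \eqref{hecke} (with $a_r$ in place of the misprinted $a_{r+1}$),
\[
(T_{a_r}W_r^\circ)(1_r)=\sum_{x\in\BF_q^{r-1}} W_r^\circ(u(x)a_r)=\sum_{x}\psi_r(u(x))\,W_r^\circ(a_r).
\]
The only entry of $u(x)$ that $\psi_r$ sees is the $(r-1,r)$ entry, which is $x_{r-1}\in\o$. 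Since $\psi$ has conductor $\o$, every term contributes $1$. Hence
\[
\lambda_{a_r}=q^{r-1}W_r^\circ(a_r).
\]

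\emph{Step 2: the value $W_r^\circ(a_r)$.} I would invoke the explicit formula for Whittaker newforms (Miyauchi, Matringe). For $\Pi_r$ of conductor one, with associated unramified representation $\Pi_r^u$ of $G_{r-1}$ as in \eqref{unr}, it states that for $t\in A_{r-1}$,
\[
W_r^\circ\begin{pmatrix} t\\ &1\end{pmatrix}=\abs{\det t}_F^{1/2}\,W^{\circ}_{\Pi_r^u}(t),
\]
where $W^\circ_{\Pi_r^u}$ is the normalized spherical Whittaker function. At $t=\varpi 1_{r-1}$ the spherical function equals the central character of $\Pi_r^u$ at $\varpi$, which is the determinant of its parameter at $\varpi$. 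For $r=n+1$ this is
\[
q^{-1/2}\chi(\varpi)\prod_{i=1}^{n-1}\chi_i(\varpi)=q^{-1/2}\chi^{-1}(\varpi),
\]
using $\chi^2\chi_1\cdots\chi_{n-1}=1$. Therefore
\[
\lambda_{a_{n+1}}=q^{n}\cdot q^{-n/2}\cdot q^{-1/2}\chi^{-1}(\varpi)=q^{\frac{n-1}{2}}\chi^{-1}(\varpi).
\]
The identical computation with \eqref{pin} gives $\lambda_{a_n}=q^{\frac{n-2}{2}}\xi^{-1}(\varpi)$.

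\emph{Step 3: the eigenvalue $\lambda_{b_n}$.} Note that $b_n^{-1}=\varpi^{-1}a_n$, and the central character is trivial, so $T_{b_n^{-1}}=T_{a_n}$. Since $\Pi_n$ is tempered, hence unitary, the invariant inner product gives
\[
\langle T_g v,v'\rangle=\langle v,T_{g^{-1}}v'\rangle .
\]
Applying this on the line $\Pi_n^{K_{1,n}}$ yields $\lambda_{b_n}=\overline{\lambda_{a_n}}$. Because $\xi$ is unitary, this equals $q^{\frac{n-2}{2}}\xi(\varpi)$.

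\emph{Main obstacle.} The main obstacle is Step 2: one must be sure the cited torus formula for conductor-one newforms holds with exactly this normalization and this $\psi_r$ convention, including the factor $\abs{\det t}^{1/2}$ and the identification of $\Pi_r^u$. Only the single value at $\varpi 1_{r-1}$ is needed. If the citation is unclear, I would verify that value directly. This can be done by realizing $\Pi_r$ as a subquotient of an induced representation from $\St_2\otimes\chi$ and the $\chi_i$, or by comparing with the $L$-function identity obtained from the unramified Rankin--Selberg integral against the $\GL_{r-1}$ spherical vector, in the style of Theorem \ref{BKL}.
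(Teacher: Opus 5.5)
Your proposal is correct and follows essentially the paper's proof: you reduce to $\lambda_{a_r}=q^{r-1}W_r^\circ(a_r)$ via the coset decomposition (checking $\psi_r(u(x))=1$ and reading \eqref{hecke} with $a_r$ in place of the misprinted $a_{r+1}$), and evaluate $W_r^\circ(a_r)$ by Matringe's torus formula together with $\chi^2\chi_1\cdots\chi_{n-1}=1$. You then obtain $\lambda_{b_n}=\overline{\lambda_{a_n}}$ by adjointness, using $b_n=\varpi a_n^{-1}$, trivial central character, and unitarity, exactly as the paper does.
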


\begin{proof}
By \cite[Theorem 3.1]{Matringe}, \eqref{unr} and that the central character of  $\Pi_{n+1}$ is trivial, we have 
\[
W_{n+1}^\circ(a_{n+1}) = q^{-\frac{n}{2}} \cdot q^{-\frac{1}{2}}\chi(\varpi) \cdot \prod^{n-1}_{i=1}\chi_i(\varpi) = q^{-\frac{n+1}{2}}\chi^{-1}(\varpi). 
\]
Then by \eqref{hecke}, 
\[
\lambda_{a_{n+1}} =T_{a_{n+1}}(W_{n+1}^\circ)(1_{n+1}) =  q^n \cdot W^\circ_{n+1}(a_{n+1}) = q^{\frac{n-1}{2}}\chi^{-1}(\varpi). 
\]
Similarly, we have the formula for $\lambda_{a_n}$. Since $b_n = \varpi\cdot a_n^{-1}$ and the central character of $\Pi_n$ is trivial, $T_{b_n}$ is the adjoint of $T_{a_n}$ with respect to an invariant inner product on the unitarizable representation $\Pi_n$, which implies that $\lambda_{b_n} = \overline{\lambda_{a_n}}$. 
\end{proof}

In the sequel, we assume that $\Re(s)$ is sufficiently large such that all the Rankin--Selberg integrals are absolutely convergent. Similar to \eqref{van}, we have 
\[
Z(s, W_{n+1}^\circ, W_n^\circ) = 0. 
\]
Applying $T_{a_{n+1}}$ to $W_{n+1}^\circ$ gives that 
\be \label{sum1}
\sum_{x\in \mathbb{F}_q^n} Z(s, u(x)a_{n+1}. W_{n+1}^\circ, W_n^\circ) =0.
\ee
Write 
\[
x=(x_1,x_2,\ldots, x_n)\in \mathbb{F}_q^n.
\]
We split the sum \eqref{sum1} according to 
whether $x_n=0$ or not. 

\subsection{The summands for $x_n=0$}

\begin{leml} \label{xn=0}
We have 
\[
\sum_{y\in \mathbb{F}_q^{n-1}}Z(s, u(y,0)a_{n+1}. W_{n+1}^\circ, W_n^\circ) = q^{n(s-\frac{1}{2})+n-1} Z(s, W_{n+1}^{\rm av}, W_n^\circ). 
\]
\end{leml}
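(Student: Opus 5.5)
Write $a:=a_{n+1}$. The plan is to move the element $a$ past the unipotent $u(y,0)$ and then absorb it into the integration variable $g$ of the Rankin--Selberg integral \eqref{RS}. Since $a=\operatorname{diag}(\varpi 1_n,1)$, conjugation gives
\[
u(y,0)\,a = a\,u(\varpi^{-1}y,0),
\]
because conjugating $u(x)$ by $a^{-1}$ rescales the last column: $a^{-1}u(x)a = u(\varpi^{-1}x)$. Hence
\[
u(y,0)a.W_{n+1}^\circ = a.\bigl(u(\varpi^{-1}y,0).W_{n+1}^\circ\bigr).
\]

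The next step is the substitution. Note that $a=\begin{pmatrix} h & \\ & 1\end{pmatrix}$ with $h=\varpi 1_n$ central in $G_n$. For any $W\in\CW(\Pi_{n+1},\psi_{n+1})$ we have
\[
Z(s,a.W,W_n^\circ)=\int_{N_n\bs G_n} W\begin{pmatrix} gh & \\ & 1\end{pmatrix} W_n^\circ(g)\abs{\det g}^{s-\frac12}\od g.
\]
We change variables $g\mapsto g h^{-1}$, which preserves the quotient measure since $h$ is central. This turns $W_n^\circ(g)$ into $W_n^\circ(gh^{-1})=W_n^\circ(g)$, using the trivial central character. It also multiplies the integrand by $\abs{\det h}^{-(s-\frac12)}=q^{n(s-\frac12)}$, since $\abs{\det h}=q^{-n}$. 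Therefore
\[
Z(s,a.W,W_n^\circ)=q^{n(s-\frac12)}Z(s,W,W_n^\circ).
\]

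Finally we sum over $y\in\BF_q^{n-1}$. The remaining task is to identify $\sum_{y}u(\varpi^{-1}y,0).W_{n+1}^\circ$ with $q^{n-1}W_{n+1}^{\rm av}$. The definition in Theorem \ref{BKL} uses $u(\varpi^{-1}y,1)$ instead of $u(\varpi^{-1}y,0)$. The two differ by the factor $u(0,\ldots,0,1)$, which lies in $K_{1,n+1}$: its bottom row is $(0,\ldots,0,1)$. Since $u(0,\ldots,0,1)$ commutes with $u(\varpi^{-1}y,0)$, we get $u(\varpi^{-1}y,1).W_{n+1}^\circ=u(\varpi^{-1}y,0).W_{n+1}^\circ$. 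Summing gives the factor $q^{n-1}$, so the total exponent of $q$ is $n(s-\frac12)+n-1$, as claimed.

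The only delicate point is the bookkeeping of the sign in $\abs{\det h}^{-(s-1/2)}$ and the direction of the conjugation. Neither is a real obstacle; all integrals converge absolutely for $\Re(s)\gg0$, so the manipulations are justified.
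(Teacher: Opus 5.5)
Your proof is correct and follows the paper's argument. You commute $a_{n+1}$ past $u(y,0)$ to get $a_{n+1}u(\varpi^{-1}y,0)$, replace the last coordinate $0$ by $1$ using $u(e_n)\in K_{1,n+1}$, and absorb $a_{n+1}$ by the central substitution $g\mapsto \varpi^{-1}g$ together with the trivial central character of $\Pi_n$; the only difference is that you do the substitution before the $0\to 1$ replacement, which is immaterial.
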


\begin{proof}
By the $K_{1,n+1}$-invariance of $W_{n+1}^\circ$, we have
\[
\begin{aligned}
Z(s, u(y,0)a_{n+1}. W_{n+1}^\circ, W_n^\circ)\, & = Z(s,  a_{n+1} u(\varpi^{-1}y, 0). W_{n+1}^\circ, W_n^\circ)  \\
& = Z(s,  a_{n+1} u(\varpi^{-1}y, 1). W_{n+1}^\circ, W_n^\circ)  \\
& = q^{n(s-\frac{1}{2})} Z(s,   u(\varpi^{-1}y, 1). W_{n+1}^\circ, W_n^\circ),
\end{aligned}
\]
where the last equality follows from a change of variable $g\mapsto \varpi^{-1} g$ in the Rankin--Selberg integral \eqref{RS}. This implies the lemma.
\end{proof}

\subsection{The summands for $x_n\neq 0$}  Let $e_1, e_2,\ldots, e_n$ be the standard basis of $F^n$. 

\begin{leml}\label{xnneq0}
We have 
\[
\begin{aligned}
Z(s, u(x)a_{n+1}.W_{n+1}^\circ, W_n^\circ) & = Z(s, u(e_n)a_{n+1}.W_{n+1}^\circ, W_n^\circ) \\
& = -\frac{q^{n(s-\frac{1}{2})}}{q-1} Z(s, W_{n+1}^{\rm av}, W_n)
\end{aligned}
\]
for every $x\in \mathbb{F}_q^n$ with $x_n\neq 0$.
\end{leml}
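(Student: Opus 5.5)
The first equality comes from the $G_n$-invariance of the Rankin--Selberg integral, using elements of $G_n$ that fix $W_n^\circ$ and, after conjugation, also fix $W_{n+1}^\circ$. The second equality then follows by substituting into \eqref{sum1} and using Lemma \ref{xn=0}. (In the statement I read $W_n$ as $W_n^\circ$.)

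\emph{First equality.} Recall that for $h\in G_n$, embedded as $\tilde h=\begin{pmatrix} h \\ & 1\end{pmatrix}$, a change of variables $g\mapsto gh$ in \eqref{RS} gives $Z(s,\tilde h.W_{n+1},h.W_n)=Z(s,W_{n+1},W_n)$. I would use two kinds of $h$.

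The first kind is $h=k\in K_{1,n}$. Then $k.W_n^\circ=W_n^\circ$. Moreover $\tilde k$ commutes with $a_{n+1}=\diag(\varpi 1_n,1)$ and satisfies $\tilde k u(x)\tilde k^{-1}=u(x k^{\rm t})$, i.e. the column $x^{\rm t}$ is replaced by $kx^{\rm t}$. Also $\tilde k\in K_{1,n+1}$, since its last row is $(0,\ldots,0,1)$. Hence
\[
\tilde k u(x)a_{n+1}.W_{n+1}^\circ=u(xk^{\rm t})a_{n+1}\tilde k.W_{n+1}^\circ=u(xk^{\rm t})a_{n+1}.W_{n+1}^\circ .
\]
This gives $Z(s,u(x)a_{n+1}.W_{n+1}^\circ,W_n^\circ)=Z(s,u(xk^{\rm t})a_{n+1}.W_{n+1}^\circ,W_n^\circ)$.

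The second kind is $h=t1_n$ with $t\in\o^\times$. Here $\Pi_n$ has trivial central character and $\diag(t1_n,1)\in K_{1,n+1}$. The same computation replaces $x$ by $tx$.

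The congruence condition on $k\in K_{1,n}$ forces the last row of $k$ to be $\equiv(0,\ldots,0,1)$, so the last coordinate of $kx^{\rm t}$ stays $\equiv x_n$. The other rows are free, so one can subtract multiples of $x_n$ to kill $x_1,\ldots,x_{n-1}$ modulo $\p$. For example, take $k$ unipotent with last column $(-x_1/x_n,\ldots,-x_{n-1}/x_n,1)^{\rm t}$. Then scaling by $t=\omega(x_n)^{-1}$ moves the vector to $e_n$ modulo $\p$.

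The discrepancy lies in $\o^n$ rather than exactly in $\omega(\BF_q)^n$, and it is absorbed as follows. For $z\in\o^n$ we have $u(\varpi z)a_{n+1}=a_{n+1}u(z)$ with $u(z)\in K_{1,n+1}$, so only $x$ modulo $\p$ matters. This yields the first equality. I expect this bookkeeping of the invariance to be the only point needing care.

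\emph{Second equality.} Substitute into \eqref{sum1}. The terms with $x_n=0$ contribute, by Lemma \ref{xn=0}, $q^{n(s-\frac12)+n-1}Z(s,W_{n+1}^{\rm av},W_n^\circ)$. By the first equality, the remaining $q^n-q^{n-1}=q^{n-1}(q-1)$ terms all equal $Z(s,u(e_n)a_{n+1}.W_{n+1}^\circ,W_n^\circ)$. Therefore
\[
q^{n-1}(q-1)\,Z(s,u(e_n)a_{n+1}.W_{n+1}^\circ,W_n^\circ)=-q^{n(s-\frac12)+n-1}Z(s,W_{n+1}^{\rm av},W_n^\circ),
\]
and dividing by $q^{n-1}(q-1)$ gives the claimed formula. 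All of this holds for $\Re(s)$ large, where every integral converges absolutely, and then as rational functions of $q^s$.
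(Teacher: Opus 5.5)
Your proof is correct and is essentially the paper's argument: the paper combines your unipotent move (by an element of $K_{1,n}$ with last column $(x_n^{-1}y,1)^{\rm t}$) and your scalar move (by $x_n 1_n$) into one explicit factorization of $u(x)a_{n+1}$. It then uses the $K_{1,n+1}$-invariance of $W_{n+1}^\circ$, the trivial central character and a change of variables, and derives the second equality from \eqref{sum1} and Lemma \ref{xn=0} exactly as you do. Your extra reduction modulo $\mathfrak{p}$ is harmless but not needed, since computing with $\omega(x_i)\in\mathfrak{o}$ makes the reduction to $e_n$ exact; you are also right that $W_n$ in the statement means $W_n^\circ$.
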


\begin{proof}
Write $x= (y, x_n)$ with $y\in \mathbb{F}_q^{n-1}$. It is easy to verify that
\[
\begin{aligned}
u(x) a_{n+1} & = \begin{pmatrix} x_n 1_n &  \\ & 1\end{pmatrix} u(x_n^{-1}y, 1) a_{n+1} \begin{pmatrix} x_n^{-1}1_n \\ & 1\end{pmatrix} \\
&  =  \begin{pmatrix} x_n 1_n & \\ & 1\end{pmatrix} \begin{pmatrix} u(x_n^{-1}y) \\ & 1\end{pmatrix} u(e_n) a_{n+1} \begin{pmatrix} u(-x_n^{-1}y) \\  & 1\end{pmatrix}  \begin{pmatrix} x_n^{-1}1_n \\ & 1\end{pmatrix}.
\end{aligned}
\]
The first equality of the lemma again follows from the $K_{1,n+1}$-invariance of $W_{n+1}^\circ$ and a change of variable in the Rankin-Selberg integral. The second equality follows from the first one, \eqref{sum1} and 
Lemma \ref{xn=0}.
\end{proof}

\section{Integrals of opposite newforms}  \label{sec4}

In this section, we compute $Z(s, c.W_{n+1}^\circ, W_n^\circ)$ and prove Theorem \ref{thm:main}, where $c$ is given by \eqref{c}.  By \eqref{hecke}, 
\be \label{sum2}
\lambda_{a_{n+1}} Z(s, c.W_{n+1}^\circ, W_n^\circ) = \sum_{x\in \mathbb{F}_q^n}Z(s, c \, u(x)a_{n+1}. W_{n+1}^\circ, W_n^\circ).
\ee
We split the sum according to $x_1=0$ or not.

\subsection{The summands for $x_1=0$}

\begin{leml} \label{x1=0}
We have 
\[
\sum_{z\in \mathbb{F}_q^{n-1}} Z(s, c\, u(0,z) a_{n+1}.W_{n+1}^\circ, W_n^\circ) = q^{-s+\frac{1}{2}}\lambda_{b_n} Z(s, c. W_{n+1}^\circ, W_n^\circ).
\]
\end{leml}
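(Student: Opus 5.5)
The plan is to conjugate each element $c\,u(0,z)\,a_{n+1}$ through $c$, so that it becomes an element of the embedded copy $\begin{pmatrix} G_n \\ & 1\end{pmatrix}$ times $c$, up to the center. Then a change of variables in \eqref{RS} moves that element onto $W_n^\circ$, and the resulting sum over $z$ should be recognized as the Hecke operator $T_{b_n}$.

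\textbf{Step 1: conjugation through $c$.} The matrix $c$ is a cyclic permutation. Conjugation $g\mapsto cgc^{-1}$ sends the index $n+1$ of $g$ to $n$, the index $1$ to $n+1$, and the index $j$ to $j-1$ for $2\le j\le n$. The entries of $u(0,z)$ sit in positions $(i,n+1)$ with $2\le i\le n$, so they move to positions $(i-1,n)$. Writing $u_n(z)\in N_n$ for the unipotent matrix with last column $(z,1)^{\rm t}$, this gives
\[
c\,u(0,z)\,c^{-1}=\begin{pmatrix} u_n(z) \\ & 1\end{pmatrix}.
\]
Similarly,
\[
c\,a_{n+1}\,c^{-1}=\diag(\varpi 1_{n-1},1,\varpi)=\varpi\begin{pmatrix} b_n^{-1} \\ & 1\end{pmatrix}.
\]
The central character of $\Pi_{n+1}$ is trivial, so
\[
c\,u(0,z)\,a_{n+1}.W_{n+1}^\circ=\begin{pmatrix} h_z \\ & 1\end{pmatrix} c.W_{n+1}^\circ,\qquad h_z:=u_n(z)\,b_n^{-1}.
\]

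\textbf{Step 2: change of variables.} Substitute $g\mapsto g h_z^{-1}$ in \eqref{RS}, using the left $N_n$-equivariance and the $|\det|$ factor. Since $\abs{\det h_z}_F=q$, this gives
\[
Z\Bigl(s,\begin{pmatrix} h_z \\ & 1\end{pmatrix}c.W_{n+1}^\circ,W_n^\circ\Bigr)=q^{-s+\frac12}\,Z\bigl(s,c.W_{n+1}^\circ,\,b_n u_n(-z).W_n^\circ\bigr).
\]
Summing over $z\in\BF_q^{n-1}$, it remains to show that
\[
\sum_z Z\bigl(s,c.W_{n+1}^\circ,\,b_n u_n(-z).W_n^\circ\bigr)=Z\bigl(s,c.W_{n+1}^\circ,\,T_{b_n}W_n^\circ\bigr)=\lambda_{b_n}Z(s,c.W_{n+1}^\circ,W_n^\circ).
\]
The last equality holds by the definition of $\lambda_{b_n}$.

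\textbf{Step 3: the main obstacle.} The difficulty is identifying $\sum_z b_n u_n(-z)$ with $T_{b_n}$ when acting on $W_n^\circ$. A naive coset computation gives representatives of $K_{1,n}b_nK_{1,n}/K_{1,n}$ of a different shape, for instance lower-unipotent translates, so $b_nu_n(-z)$ need not literally be a system of representatives. I would work out the decomposition $K_{1,n}b_nK_{1,n}=\bigsqcup_\gamma \gamma K_{1,n}$ explicitly; the index should be $q^{n-1}$, consistent with $T_{b_n}$ being adjoint to $T_{a_n}$.

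If needed, I would then use an extra invariance of the linear functional $W\mapsto Z(s,c.W_{n+1}^\circ,W)$. It is invariant under $\begin{pmatrix} k \\ & 1\end{pmatrix}$ for those $k\in K_n$ with $\begin{pmatrix} k \\ & 1\end{pmatrix}\in cK_{1,n+1}c^{-1}$, which is a large subgroup because $c$ fixes the relevant congruence conditions up to relabeling. With this invariance I would show that each $b_nu_n(-z)$ can be exchanged for a genuine coset representative $\gamma$ without changing the integral. Concretely, I would try to write $b_nu_n(-z)=k_1\gamma_z k_2$ with $k_2\in K_{1,n}$ and $k_1$ in that invariance group, and check that $z\mapsto\gamma_z K_{1,n}$ is a bijection onto the cosets. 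This bookkeeping is the step I expect to require the most care.
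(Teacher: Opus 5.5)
Your proposal is correct and is essentially the paper's argument. The paper also writes $c\,u(0,z)a_{n+1}=\varpi\begin{pmatrix} u(z)b_n^{-1} \\ & 1\end{pmatrix}c$, but it recognizes the Hecke integral over $K_{1,n}b_n^{-1}K_{1,n}=\bigsqcup_z u(z)b_n^{-1}K_{1,n}$ before changing variables $g\mapsto gh^{-1}$, whereas you change variables first and land on $b_nu_n(-z)$, which are right-coset representatives of $K_{1,n}\backslash K_{1,n}b_nK_{1,n}$.

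Your Step 3 obstacle disappears using the invariance you already named. Since $c^{-1}\begin{pmatrix} k \\ & 1\end{pmatrix}c$ has last row $(0,k_{n1},\ldots,k_{nn})$, the group $\{k\in K_n:\begin{pmatrix} k \\ & 1\end{pmatrix}\in cK_{1,n+1}c^{-1}\}$ is exactly $K_{1,n}$. Hence $h\mapsto Z(s,c.W_{n+1}^\circ,h.W_n^\circ)$ is bi-$K_{1,n}$-invariant, and both sides of your reduced identity equal $q^{n-1}Z(s,c.W_{n+1}^\circ,b_n.W_n^\circ)$, so no bijection bookkeeping is needed. This is the same fact the paper uses tacitly.
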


\begin{proof}
We have 
\[
c \, u(0,z)a_{n+1} = \begin{pmatrix} u(z) \\ & 1\end{pmatrix} c\, a_{n+1} = \varpi \cdot  \begin{pmatrix} u(z) \\ & 1\end{pmatrix} \begin{pmatrix} b_n^{-1} \\ & 1\end{pmatrix} c,
\]
where $b_n$ is given by \eqref{ab}. 
By the trivial central character assumption, 
\[
\begin{aligned}
 & \sum_{z\in \mathbb{F}_q^{n-1}} Z(s, c\, u(0,z) a_{n+1}.W_{n+1}^\circ, W_n^\circ)  \\
 =\, &  \frac{1}{{\rm vol}(K_{1,n})} \int_{K_{1,n} b_n^{-1} K_{1,n} } Z\left(s, \begin{pmatrix} h \\ & 1\end{pmatrix}c.W_{n+1}^\circ, W_n^\circ\right)\od\! h \\
 =\, & q^{-s+\frac{1}{2}} Z\left(s, c.W_{n+1}^\circ, T_{b_n}(W_n^\circ)\right) \\
 =\, & q^{-s+\frac{1}{2}}\lambda_{b_n} Z(s, c. W_{n+1}^\circ, W_n^\circ),
\end{aligned} 
\]
where the second last equality follows from a change of variable $g\mapsto gh^{-1}$ in the Rankin--Selberg integral. 
\end{proof}

\subsection{The summands for $x_1\neq 0$}

\begin{leml} \label{x1neq0}
We have 
\[
Z(s, c\, u(x) a_{n+1}.W_{n+1}^\circ, W_n^\circ) = Z(s, u(e_n)a_{n+1}.W_{n+1}^\circ, W_n^\circ)
\]
for every $x\in \mathbb{F}_q^n$ with $x_1\neq 0$. 
\end{leml}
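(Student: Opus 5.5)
The plan is to mimic the proof of Lemma \ref{xnneq0}. We look for a factorization
\[
c\,u(x)a_{n+1} = \begin{pmatrix} h \\ & 1\end{pmatrix} u(e_n) a_{n+1}\, k,\qquad h\in K_{1,n},\ k\in K_{1,n+1}.
\]
Given such a factorization, the $K_{1,n+1}$-invariance of $W_{n+1}^\circ$ removes $k$. The change of variable $g\mapsto gh^{-1}$ in \eqref{RS} moves $h$ onto $W_n^\circ$, where it acts trivially; no power of $q^{-s}$ appears because $\abs{\det h}_F=1$. Note that $\det(c\,u(x)a_{n+1})$ and $\det(u(e_n)a_{n+1})$ both have valuation $n$, so this is consistent with $h$ and $k$ having unit determinant.

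\textbf{Step 1 (reduction to $x=e_1$).} Conjugation by $c$ uses the permutation $\sigma(1)=n+1$, $\sigma(i)=i-1$ for $2\le i\le n+1$. It carries the entries $x_2,\dots,x_n$ of $u(x)$ into column $n$, rows $1,\dots,n-1$, and carries $x_1$ to position $(n+1,n)$. Hence
\[
c\,u(x)=\begin{pmatrix} u'\, t \\ & 1\end{pmatrix} c\,u(e_1)\begin{pmatrix} t'^{-1} \\ & 1\end{pmatrix}
\]
for suitable upper unipotent $u'\in N_n\cap K_n$ (built from $x_1^{-1}x_2,\dots,x_1^{-1}x_n$) and diagonal unit matrices $t,t'$. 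The matrix $t$ scales row $n$ by $x_1^{\pm1}$; the matrix $t'$ scales coordinate $1$ of $F^{n+1}$ and commutes with $a_{n+1}$. First, check that the right-hand factor commutes past $a_{n+1}$ and lands in $K_{1,n+1}$: it is diagonal with last entry $1$, so this holds. Second, check that $u'$ and $t$ lie in $K_{1,n}$: this holds because $u'$ is upper unipotent and $t$ is a diagonal unit matrix. Both pieces are then absorbed, exactly as in the proof of Lemma \ref{xnneq0}, and it suffices to treat $x=e_1$.

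\textbf{Step 2 (the key matrix identity).} For $x=e_1$ we have $c\,u(e_1)=(1+E_{n+1,n})\,c$ and $c\,a_{n+1}c^{-1}=\diag(\varpi 1_{n-1},1,\varpi)$. Thus everything reduces to the $2\times2$ block in rows and columns $n,n+1$, where we must compare
\[
\begin{pmatrix}1&0\\1&1\end{pmatrix}\begin{pmatrix}1&\\&\varpi\end{pmatrix}
\qquad\text{with}\qquad
\begin{pmatrix}\varpi&1\\&1\end{pmatrix}.
\]
The trailing factor $c$ must be handled together with these: $c$ moves coordinate $1$ to the end, so we aim to write $c$, combined with the block data, as a product of a $G_n$-element on the left and a $K_{1,n+1}$-element on the right. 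I would use the Bruhat-type identity
\[
\begin{pmatrix}1&0\\1&1\end{pmatrix}=\begin{pmatrix}1&1\\0&1\end{pmatrix}\begin{pmatrix}0&-1\\1&1\end{pmatrix}
\]
to convert the lower unipotent into an upper one. Then I would push the resulting Weyl-type element and $c$ to the right, checking that the right factor has last row $\equiv(0,\dots,0,1)\pmod\p$. The left factor must consist of a block $\begin{pmatrix} h\\&1\end{pmatrix}$ with $h\in K_{1,n}$ together with $u(e_n)$. Any remaining scalar or diagonal pieces either have unit entries or form a multiple of the identity; multiples of the identity act trivially because the central characters are trivial.

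The main obstacle is Step 2: finding explicit $h$ and $k$ satisfying both congruence conditions simultaneously. The left factor must have last row $\equiv e_n\pmod\p$ within $G_n$, and the right factor must have last row $\equiv e_{n+1}\pmod\p$ within $G_{n+1}$. If a direct factorization fails, the fallback is to allow $h$ to be of the form $\varpi^{m}h_0$ with $h_0\in K_{1,n}$. In that case we use $T_{b_n}$-type averaging as in Lemma \ref{x1=0}, but I expect the pure $K_{1,n}\times K_{1,n+1}$ factorization to work, since the determinant valuations already match.
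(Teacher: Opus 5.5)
\textbf{There is a genuine gap.} Your strategy is the paper's: factor $c\,u(x)a_{n+1}=\begin{pmatrix} h&\\&1\end{pmatrix}u(e_n)a_{n+1}k$, remove $k$ by invariance of $W_{n+1}^\circ$, and move $h$ onto $W_n^\circ$ via $g\mapsto gh^{-1}$. But the factorization is the entire content of the lemma, and you never produce it: Step~2 is left as ``the main obstacle.''

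Worse, the framework you set up for it is impossible when $x_1\not\equiv 1\pmod{\mathfrak p}$. In any factorization of this shape,
\[
k=(u(e_n)a_{n+1})^{-1}\begin{pmatrix} h^{-1}&\\&1\end{pmatrix}c\,u(x)a_{n+1}
\]
has the same last row as $c\,u(x)a_{n+1}$, namely $(\varpi,0,\ldots,0,x_1)$. So $k\in K_{1,n+1}$ is impossible unless $x_1=1$, and your expectation that a pure $K_{1,n}\times K_{1,n+1}$ factorization exists is false. For the same reason, the Step~1 identity cannot hold as written. The last row of $c\,u(x)$ is $(1,0,\ldots,0,x_1)$, while your right-hand side has last entry $1$. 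Also, a diagonal unit matrix $t$ with last entry $x_1^{\pm1}$ is not in $K_{1,n}$.

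What is actually needed is that $W_r^\circ$ is fixed by the whole parahoric $\mathfrak o^\times K_{1,r}$. This holds precisely because $\Pi_r$ has trivial central character, and it is the essential input, not just a device for discarding scalar matrices. Your fallback with $\varpi^m h_0$ and $T_{b_n}$-averaging addresses a problem that does not arise: the obstruction is the unit $x_1$, not a power of $\varpi$.

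With this correction, both of your steps go through. Step~1 becomes
\[
c\,u(x)=\begin{pmatrix} h_x&\\&1\end{pmatrix}c\,u(e_1)\diag(1_n,x_1),\qquad h_x=\begin{pmatrix}1_{n-1}&x_1^{-1}z^{\rm t}\\0&x_1^{-1}\end{pmatrix},\quad z=(x_2,\ldots,x_n).
\]
Here $\diag(1_n,x_1)\in\mathfrak o^\times K_{1,n+1}$ commutes with $a_{n+1}$, and $h_x^{-1}\in\mathfrak o^\times K_{1,n}$.

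Step~2 needs no Bruhat manipulation. In your $2\times2$ block, $\begin{pmatrix}\varpi&1\\0&1\end{pmatrix}^{-1}\begin{pmatrix}1&0\\1&\varpi\end{pmatrix}=\begin{pmatrix}0&-1\\1&\varpi\end{pmatrix}$, so
\[
c\,u(e_1)a_{n+1}=u(e_n)a_{n+1}\,k,\qquad k:=\diag\Bigl(1_{n-1},\begin{pmatrix}0&-1\\1&\varpi\end{pmatrix}\Bigr)c .
\]
Since $k\in K_{n+1}$ has last row $(\varpi,0,\ldots,0,1)$, it lies in $K_{1,n+1}$, and $h=1_n$ works.

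Combining the two steps gives exactly the paper's decomposition
\[
c\,u(x)a_{n+1}=\begin{pmatrix}h_x&\\&1\end{pmatrix}u(e_n)a_{n+1}k_x,\qquad k_x=k\,\diag(1_n,x_1)\in\mathfrak o^\times K_{1,n+1}.
\]
The trivial central characters of $\Pi_{n+1}$ and $\Pi_n$ then finish the argument.
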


\begin{proof}
Write $x= (x_1, z)$ with $z\in \mathbb{F}_q^{n-1}$. Note that 
\[
c = \begin{pmatrix} X & e_n^{\rm t} \\ e_1 & 0\end{pmatrix},\quad \text{where}\quad X := \left(\begin{smallmatrix}  0 & 1 \\ & 0 & 1 \\  & & \ddots & \ddots \\ & & & 0 & 1 \\ & & & & 0\end{smallmatrix}\right)_{n\times n}.
\]
It is straightforward to verify the Iwasawa decomposition 
\[
c\, u(x) a_{n+1} = \underbracket{\begin{pmatrix} h_x \\ & 1\end{pmatrix} u(e_n) a_{n+1}}_{\in B_{n+1}} k_x,
\]
where
\[
h_x:=\begin{pmatrix} 1_{n-1} & x_1^{-1}z^{\rm t}  \\ 0 & x_1^{-1}\end{pmatrix}\in K_n \quad \text{and}\quad k_x:= \begin{pmatrix} h_x^{-1} X - e_n^{\rm t} e_1 & 0 \\ \varpi e_1 & x_1\end{pmatrix}\in \frak o^\times K_{1,n+1}.
\]
Then $k_x.W_{n+1}^\circ = W_{n+1}^\circ$ by the trivial central character assumption, hence 
\[
Z(s, c\, u(x) a_{n+1}.W_{n+1}^\circ, W_n^\circ) = Z(s, u(e_n)a_{n+1}.W_{n+1}^\circ, W_n^\circ)
\]
by a change of variable $g\mapsto gh_x^{-1}$. 
\end{proof}

\subsection{Proof of Theorem \ref{thm:main}} Now we are ready to prove Theorem \ref{thm:main}. Combing previous results, we have 
\[
\begin{aligned}
& (\lambda_{a_{n+1}} - q^{-s+\frac{1}{2}} \lambda_{b_n})Z(s, c.W_{n+1}^\circ, W_n^\circ) & \\
=\, &   (q-1)q^{n-1} Z(s, u(e_n)a_{n+1}.W_{n+1}^\circ, W_n^\circ)  &  (\text{by \eqref{sum2}, Lemmas \ref{x1=0}, \ref{x1neq0}}) \\
=\, &  -q^{n(s-\frac{1}{2})+n-1} Z(s, W_{n+1}^{\rm av}, W_n^\circ)  &  (\text{by Lemma \ref{xnneq0}}) \\
=\, & -\frac{q^{n(s-\frac{1}{2})+n-1}}{\#\mathbb{P}^{n-1}(\mathbb{F}_q)} L(s, \Pi_{n+1}^u\times \Pi_n^u) & (\text{by Theorem \ref{BKL}})\\
=\, & -\frac{q^{n(s-\frac{1}{2})+n-1}(1-q^{-s}\chi(\varpi)\xi(\varpi))}{\#\mathbb{P}^{n-1}(\mathbb{F}_q)} L(s, \Pi_{n+1}\times \Pi_n) & (\text{by \eqref{naive}}).
\end{aligned}
\]
Taking into account the Hecke eigenvalues $\lambda_{a_{n+1}} = q^{\frac{n-1}{2}} \chi^{-1}(\varpi)$ and $\lambda_{b_n} = q^{\frac{n-2}{2}} \xi(\varpi)$ from Lemma \ref{lem:eigen}, it follows that 
\[
Z(s, c.W_{n+1}^\circ, W_n^\circ) = -\frac{\chi(\varpi) q^{ns- \frac{1}{2}}}{ \# \mathbb{P}^{n-1}(\mathbb{F}_q)}
L(s,\Pi_{n+1}\times\Pi_n).
\]
In view of \eqref{RSeq}, this proves Theorem \ref{thm:main}.

\section{Nonvanishing of local relative characters} \label{sec:conj}

As an application of Theorem \ref{thm:main},  in this section we sketch and prove a case of \cite[Conjecture 3.6.10]{DZ} in the  local setting, and we refer to \cite[\S3]{DZ} for more details. 

Assume that $q$ is odd, $F$ is an unramified quadratic extension of $F_0$ with nontrivial Galois automorphism ${\sf c}$ and $\varpi\in F_0$, and that $\psi$ is trivial on $F_0$. Let $\eta: F^\times \to \{\pm1\}$ be the quadratic character associated to $F/F_0$ via local class field theory. 

Suppose that the tempered representation 
$\Pi := \Pi_{n}\otimes \Pi_{n+1}$ of $G':=G_{n}/G_1^{\sf c}\times G_{n+1}/G_1^{\sf c}$ is hermitian with respect to $F/F_0$, that is,
$
\Hom_{G'^{\sf c}}(\Pi, \eta^{n-1}\boxtimes\eta^n)
$
is nonzero.  Let $(V, \pi)$ be the distinguished element in the Vogan $L$-packet of $\Pi$ (\cite{GGP}). Here 
\[
V= (V_n, V_{n+1}= V_n \oplus Fu)
\]
is a relevant pair of $F/F_0$-hermitian spaces, and $\pi$ is an irreducible tempered 
representation of the unitary group $G:=U(V_n)\times U(V_{n+1})$ with base change $\Pi$. 
%Let $H':=\Delta G_n$ and $H:=\Delta U(V_n)$, diagonally embedded into $G'$ and $G$ respectively. 

In \cite[\S3.2]{DZ}, a character $I_\Pi(f')$ for $f'\in \mathscr{H}(G')$ (the space of Schwartz measures on $G'$) is defined. In particular, if 
\[
f'= e_{K'}:= \frac{1}{{\rm vol}(K', \od\!g')} {\bf 1}_{K'}\od\!g'
\]
is associated to an open compact subgroup $K'$ of $G'$, then up to normalization  by local factors, $I_\Pi(e_{K'})$ is given by
\be \label{trace}
\sum_W P_1(W)P_2(W^\vee),
\ee
where $\{W\}$ is a basis of $\Pi^{K'}$ and $\{{W^\vee}\}$ is the dual basis of $(\Pi^\vee)^{K'}$. Here $\Pi$ and $\Pi^\vee$ are identified with their Whittaker models with respect to $\psi_n\boxtimes\psi_{n+1}$ and $\bar\psi_n\boxtimes\bar\psi_{n+1}$ respectively, and $P_1$ and $P_2$ are the Rankin-Selberg and Flicker-Rallis integrals respectively:  
\[
\begin{aligned}
P_1(W):& = \int_{N_n\bs G_n} W\left(h, \begin{pmatrix} h \\ & 1\end{pmatrix}\right)\od\!h, \\
P_2(W^\vee):& =\int_{N_{n-1}^{\sf c}\bs G_{n-1}^{\sf c}\times N_{n}^{\sf c}\bs G_{n}^{\sf c} } W^\vee \left( \begin{pmatrix} \epsilon_{n-1}'(\tau)h_1 \\ & 1 \end{pmatrix}, \begin{pmatrix} \epsilon_{n}'(\tau) h_2 \\ & 1\end{pmatrix}\right) \\
&\qquad\qquad \cdot \eta(\det h_1)^{n-1} 
\eta(\det h_2)^n \od\!h_1\!\od\!h_2,
\end{aligned}
\]
where $\tau\in F^\times$ is a trace zero element, and 
$
\epsilon_r'(\tau) := \diag(\tau^{r+\epsilon_r-1}, \tau^{r+\epsilon_r-2},\ldots, \tau^{\epsilon_r-1})\in G_r
$
with $\epsilon_r \in \{0,1\}$ having the same parity as $r$. See \cite[\S2]{DZ} for the choice of various measures, and \cite[\S3.2.2]{DZ} for the pairing between $\Pi$ and $\Pi^\vee$.

In \cite[\S3.4]{DZ}, a character $J_\pi(f)$ for $f\in \mathscr{H}(G)$ is also defined, which up to normalization by local factors is given by
\[
\int_{H} {\rm Tr} (\pi(h)\pi(f))\od\! h,
\]
where $H$ is the image of the diagonal embedding of $U(V_n)$ into $G$.

Let $\epsilon\in \{0,1\}$ have the same parity as $v(e)$, where $v$ is the normalized valuation on $F_0$, and $e:=\langle u, u\rangle$ with $\langle\cdot,\cdot\rangle$ the hermitian form on $V_{n+1}$. 
An $\frak o$-lattice $\Lambda_r\subset V_r$ is called a vertex lattice if 
\[
\Lambda_r \subset \Lambda_r^\vee \subset \varpi^{-1}\Lambda_r,
\]
where $\Lambda_r^\vee$ is the dual lattice with respect to a hermitian form on $V_r$. In this case $t(\Lambda_r):=\dim_{\mathbb{F}_q}(\Lambda_r^\vee/ \Lambda_r)$ is called the type of $\Lambda_r$. 
If $\Lambda_n$ is a vertex lattice of type $t$, then 
\[
\Lambda_{n+1}: = \Lambda_n\oplus \frak o \varpi^{-\lfloor v(e)/2\rfloor} u
\]
is a vertex lattice of type $t+\epsilon$, and the stabilizer $K$ of $\Lambda:=\Lambda_n\times \Lambda_{n+1}$ in $G$ is called a vertex paraphoric subgroup of type $(t, t+\epsilon)$. 
Associated to $K$ is the stabilizer $K' \subset G'$ of the lattice chain $(\Lambda \subset \Lambda^\vee)$. 

Then Corollary \ref{cor} amounts to the following result, which proves a case of \cite[Conjecture 3.6.10]{DZ}.

\begin{thml}\label{DZ}
Let the notation and assumptions be as above. Let $f= e_K$ where $K\subset G$ is a vertex parahoric subgroup of type $(1,1)$ if $\epsilon=0$, type $(n-1, n)$ if  $\epsilon =1$. 
Let $f' = e_{K'}$ where $K'\subset G'$ is associated to $K$ as above. Then 
\[
I_\Pi(e_{K'}) = J_\pi(e_K)\neq 0.
\]
\end{thml}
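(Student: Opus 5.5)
The identity $I_\Pi(e_{K'})=J_\pi(e_K)$ I will not prove from scratch. It should be the local character identity (a local form of the relative trace formula comparison) established in \cite{DZ} for matching test functions. For the vertex parahorics of types $(t,t+\epsilon)$ and the associated $K'$, the functions $e_K$ and $e_{K'}$ are shown there to be transfers of each other, at least up to the normalizing local factors. The main input of \cite[Theorem C]{DZ} is this equality together with a nonvanishing criterion. So the real content here is $I_\Pi(e_{K'})\neq0$, and by \eqref{trace} this reduces to computing $\sum_W P_1(W)P_2(W^\vee)$ on the space $\Pi^{K'}$.

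The first step is to identify $K'$ and $\Pi^{K'}$. When $\epsilon=0$ and the type is $(1,1)$, the lattice chain $\Lambda_r\subset\Lambda_r^\vee\subset\varpi^{-1}\Lambda_r$ with one-dimensional quotient $\Lambda_r^\vee/\Lambda_r$ has, in a suitable basis, stabilizer in $\GL_r(F)$ conjugate to the parahoric
\[
\{k\in K_r: (1,0,\ldots,0)k\equiv(*,0,\ldots,0)\pmod\p\}=\o^\times K_{1,r}^-,
\]
with the type-$(n-1,n)$ case handled by the dual chain. The image of $\o^\times$ is central in $G'$, so it acts trivially because $\Pi$ has trivial central character. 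Hence $\Pi^{K'}$ is the line $\sigma_n\otimes\sigma_{n+1}$, after translating by the element of $G_r$ that moves the lattice $\Lambda_r$ to the standard one. That translation only changes $P_1$ and $P_2$ by a change of variables and nonzero constants. Since the space is one-dimensional, \eqref{trace} becomes $P_1(W)P_2(W^\vee)$ with $W=W_n^-\otimes W_{n+1}^-$ and $W^\vee$ the dual vector.

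The second step gives $P_1(W)\neq0$ directly from Theorem \ref{thm:main}, since $P_1(W)=Z(\tfrac12,W_{n+1}^-,W_n^-)$ up to normalization. The factors of $K'$ may be matched in swapped order relative to the theorem, but the nonvanishing of $\ell|_{\sigma_{n+1}\otimes\sigma_n}$ from \eqref{nonvanish} covers this.

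The third step, which I expect to be the main obstacle, is $P_2(W^\vee)\neq0$ for the Flicker--Rallis periods. The Flicker--Rallis integral factors as $P_2=P_{2,n}\otimes P_{2,n+1}$, so I need each factor to be nonzero on $(\Pi_r^\vee)^{K'_r}$. I would first try the vector-free argument, mirroring \eqref{van}. Because $\Pi_r$ is $\eta^{r-1}$-distinguished, the Flicker--Rallis functional is a nonzero element of a one-dimensional space, and it is invariant under $G_r^{\sf c}=\GL_r(F_0)$. Restricted to the opposite newform line, it is nonzero exactly when this line is not annihilated. Conjugating, the question becomes whether the usual newform pairs nontrivially with $\GL_r(F_0)$. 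For that I would use the known result that the Flicker--Rallis integral of the Whittaker newform equals a nonzero multiple of the Asai $L$-value, as in Matringe's work, which applies since $F/F_0$ is unramified. Then $w_rW_r^\circ$ has the same period, because $w_r\in\GL_r(F_0)$ and the character $\eta^{r-1}(\det w_r)$ is $\pm1$. Putting the three steps together gives $I_\Pi(e_{K'})=P_1P_2\neq0$, and hence $J_\pi(e_K)\neq0$.
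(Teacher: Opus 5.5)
Your proposal is correct and is essentially the paper's proof: you quote the character identity, identify $K'$ with the image of $\o^\times K_{1,n}^-\times\o^\times K_{1,n+1}^-$ so that $\Pi^{K'}=\sigma_n\otimes\sigma_{n+1}$ is a line, get $P_1\neq0$ from Theorem~\ref{thm:main}, and get $P_2\neq0$ because the newform is a test vector for the Flicker--Rallis period, transported to the opposite newform via $w_r\in\GL_r(F_0)$. The differences are bookkeeping: the paper attributes the vertex parahoric transfer $I_\Pi(e_{K'})=J_\pi(e_K)$ to \cite[Theorem 1.1]{Z} rather than to \cite{DZ}, cites \cite[Theorem 6.1]{AM} for the Flicker--Rallis test vector, and normalizes by choosing a basis of $V_r$ adapted to $\Lambda_r$ (Gram matrix $\diag(\varpi^{-1},1_{r-1})$, with $u$ the last basis vector) instead of ``translating by an element of $G_r$'' --- the paper's formulation is safer, since $P_1$ and $P_2$ are only equivariant under $\Delta G_n$ and $G'^{\sf c}$, so an arbitrary translation would not just rescale them.
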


\begin{proof}
The vertex parahoric transfer  $I_\Pi(e_{K'}) = J_\pi(e_K)$ is due to \cite[Theorem 1.1]{Z}. It remains to show that $I_\Pi(e_{K'})\neq 0$. 
Since $\Pi_n$ and $\Pi_{n+1}$ have conductor one,  $V$ has Hasse invariant $\epsilon(V)= -1$ and $\pi = \pi_n\otimes \pi_{n+1}$ is the unique almost unramified representation in the Vogan $L$-packet by \cite[Lemma 3.6.7]{DZ} (that is, $\pi_r$ has a non-zero vector invariant under the stabilizer of a vertex lattice of type $1$ or $r-1$, where $r=n, n+1$).

Hence if $\epsilon =0$,  then we may assume that $V_r = F^{r\times 1}$ with hermitian form represented by $\diag(\varpi^{-1}, 1_{r-1})$,  and 
\[
\Lambda_r= \varpi\frak o \oplus \frak o^{r-1}\subset \Lambda_r^\vee = \frak o^r,
\]
so that the stabilizer of $(\Lambda_r \subset \Lambda_r^\vee)$ in $G_r$ is the parahoric subgroup $\CK_{1,r}^-:=\frak o^\times K_{1,r}^-$. If $\epsilon =1$,  rescaling the hermitian form by $\varpi$, we may assume that 
\[
\Lambda_r = \frak o^r \subset \Lambda_r^\ve = \frak o \oplus \varpi^{-1}\frak o^{r-1},
\]
which has the same stabilizer $\CK_{1,r}^-$ in $G_r$. 
Then $K'$ is the image of $\CK_{1,n}^-\times\CK_{1,n+1}^-$ in $G'$. 

In view of \eqref{trace}, to show  $I_\Pi(e_{K'})\neq 0$, it suffices to prove that $P_1(W) \neq 0$ and $P_2(W^\vee)\neq 0$, where $W:=W_n^-\otimes W_{n+1}^-\in \Pi^{K'}$ with dual basis $W^\vee \in (\Pi^\vee)^{K'}$. 
By Theorem \ref{thm:main}, we have $P_1(W)\neq 0$. By \cite[Theorem 6.1]{AM}, we have $P_2(W^{\vee,\circ})\neq 0$, where $W^{\vee, \circ}: =(w_n, w_{n+1}).W^\vee$ is a newform of $\Pi^\vee$. 
Since $P_2\in \Hom_{G'^{\sf c}}(\Pi^\vee, \eta^{n-1}\boxtimes \eta^n)$, it follows that $P_2(W^\vee)\neq 0$ as well. This proves the theorem.
\end{proof}

\begin{remarkl}
In general, assume that $\Pi=\Pi_n\otimes \Pi_{n+1}$ is a tempered hermitian irreducible representation of $G'$ with trivial central character, such that $\Pi_r$ ($r=n, n+1$) has conductor at most one. In view of \cite[Conjecture 3.10, Remark 3.11]{DZ},  \cite[Theorem 1.2]{D} and Theorem \ref{DZ}, the remaining case is that $\Pi_n$ has conductor one and $\Pi_{n+1}$ is unramified, for which some challenging problems remain to be solved.
%a good test function is not known yet.
\end{remarkl}

\subsection*{Acknowledgement} 

We sincerely thank Wei Zhang for suggesting this problem, for explaining its application to \cite[Conjecture 3.6.10]{DZ}, and for his helpful comments on an earlier draft of this paper.

We gratefully acknowledge the AI-related support provided by the Digital Principal Initiative at Zhejiang Lab. We further thank the Evolutionary Ensemble (EvE) framework of Yu and Yang \cite{yu2026eve}. 
An agent based on this framework was used to independently investigate the problem, providing an alternative line of reasoning and serving as an independent check that confirmed and verified our proof.  

The authors reviewed all AI-assisted contributions and take full responsibility for the final content of the manuscript.

The first author has been partially supported by National Natural Science Foundation of China No. 12526208.

%\subsection*{Conflict of interest}  On behalf of all authors, the corresponding author states that there is no conflict of interest.

%\subsection*{Data Availability} Our manuscript has no associated data.


\begin{thebibliography}{99}

\bibitem[AGRS10]{AGRS}
A.~Aizenbud, D.~Gourevitch, S.~Rallis, and G.~Schiffmann,
\emph{Multiplicity one theorems},
Ann. of Math. (2) \textbf{172} (2010), no.~2, 1407--1434.

\bibitem[AM17]{AM}
U. K. Anandavardhananand, and N.Matringe,
{\it Test vectors for local periods},
Forum Math. 29 (2017), no. 6, 1245--1260.

%\bibitem[BBBG24]{BBBG}
%B.~Brubaker, V.~Buciumas, D.~Bump, and H.~P.~A.~Gustafsson,
%\emph{Colored vertex models and Iwahori Whittaker functions},
%Selecta Math. (N.S.) \textbf{30} (2024), article no.~78.

%\bibitem[BHK98]{BHK}
%C.~J.~Bushnell, G.~M.~Henniart, and P.~C.~Kutzko,
%\emph{Local Rankin--Selberg convolutions for $\GL_n$: explicit conductor formula},
%J. Amer. Math. Soc. \textbf{11} (1998), no.~3, 703--730.

\bibitem[BKL20]{BKL}
A. Booker, M. Krishnamurthy, and M. Lee, 
{\it Test vectors for Rankin--Selberg L-functions},
J. Number Theory 209 (2020), 37--48.

\bibitem[D25]{D}
G. Dang,
{\it Local newforms and spherical characters for unitary groups},
J. Lond. Math. Soc. (2) 111 (2025), no. 6, Paper No. e70203, 35 pp.

\bibitem[DZ]{DZ}
D. Disegni, and W. Zhang,
{\it Gan--Gross--Prasad cycles and derivatives of $p$-adic $L$-functions},
\href{https://arxiv.org/abs/2410.08401}{arXiv:2410.08401}.


\bibitem[GGP12]{GGP}
W. T. Gan, B. H. Gross, and D. Prasad, {\it Symplectic local root numbers, central critical L values, and restriction problems in the representation theory of classical groups}, Ast\'erisque {\bf 346} (2012), 1--109.

\bibitem[J12]{J}
H.~Jacquet,
\emph{A correction to ``Conducteur des repr\'esentations du groupe
lin\'eaire''},
Pacific J. Math. \textbf{260} (2012), no.~2, 515--525.

\bibitem[JPSS81]{JPSS81}
H.~Jacquet, I.~I.~Piatetski-Shapiro, and J.~Shalika,
\emph{Conducteur des repr\'esentations du groupe lin\'eaire},
Math. Ann. \textbf{256} (1981), no.~2, 199--214.

\bibitem[JPSS83]{JPSS}
H.~Jacquet, I.~I.~Piatetski-Shapiro, and J.~Shalika,
\emph{Rankin--Selberg convolutions},
Amer. J. Math. \textbf{105} (1983), no.~2, 367--464.

\bibitem[M13]{Matringe}
N.~Matringe,
\emph{Essential Whittaker functions for $GL(n)$},
Doc. Math. \textbf{18} (2013), 1191--1214.

\bibitem[M14]{Miyauchi}
M.~Miyauchi,
\emph{Whittaker functions associated to newforms for $\GL(n)$ over $p$-adic fields},
J. Math. Soc. Japan 66 (2014), no. 1, 17--24.

\bibitem[YY26]{yu2026eve}
Z. Yu, and L. Yang, {\it Evolving Ensemble of Agents}, 
\href{https://arxiv.org/abs/2605.09018v4}{arXiv:2605.09018}.

\bibitem[Z25]{Z}
Z. Zhang,
{\it Maximal parahoric arithmetic transfers, resolutions and modularity},
Duke Math. J. 174 (2025), no. 1, 1--129.

%\bibitem[T14]{T}
%N.~Templier,
%\emph{Large values of modular forms},
%Cambridge J. Math. \textbf{2} (2014), no.~1, 91--116.

%\bibitem[Z80]{Zelevinsky}
%A.~V.~Zelevinsky,
%\emph{Induced representations of reductive $p$-adic groups. II. On
%irreducible representations of $\GL(n)$},
%Ann. Sci. \`Ecole Norm. Sup. (4) \textbf{13} (1980), no.~2, 165--210.

\end{thebibliography}
\end{document}